\documentclass[a4paper,10pt]{article}

\usepackage[utf8]{inputenc}
\usepackage[T1]{fontenc}
\usepackage{xspace,
	amsthm,
	amsmath,
	amssymb,
	bbm,
	tikz,
	graphicx,
	subfig,
	keyval}
\DeclareGraphicsExtensions{.png, .pdf}

\usepackage[hidelinks]{hyperref}

\usepackage{geometry}
\geometry{hmargin={3cm,3cm}}
\geometry{vmargin={5cm,3cm}}


\tikzstyle{nodino}=[circle,draw,fill,inner sep=0pt,minimum size=0.5mm]
\tikzstyle{infinito}=[circle,inner sep=0pt,minimum size=0mm]
\tikzstyle{nodo_vuoto}=[circle,draw,inner sep=0pt, minimum size=0.5*width("k")]
\usetikzlibrary{graphs}
\tikzset{every loop/.style={min distance=10mm,in=300,out=240,looseness=10}}
\tikzset{place/.style={circle,thick,draw=blue!75,fill=blue!20,minimum
		size=6mm}}
\tikzset{place2/.style={circle,thick,draw=red!75,fill=red!20,minimum
		size=6mm}}
\tikzstyle{nodo}=[circle,draw,fill,inner sep=0pt,minimum size=0.5* width("k")]


\newcommand{\rr}{{\mathbb R}}
\newcommand{\R}{{\mathbb R}}
\newcommand{\cc}{{\mathbb C}}

\newcommand{\nn}{{\mathbb N}}
\newcommand{\NN}{{\cal N}}
\newcommand{\G}{{\mathcal{G}}}
\newcommand{\udot}{\|u'\|_2}
\newcommand{\uLp}{\|u\|_p}
\newcommand{\uLtwo}{\|u\|_2}
\newcommand{\HmuG}{H_\mu^1(\mathcal{G})}
\newcommand{\uLsix}{\|u\|_6}

\newcommand\vv{\textsc{v}}
\newcommand{\eps}{\varepsilon}
\newcommand{\cost}{\kappa_\mu}

\newcommand{\mup}{\mu_{\R^+}}
\newcommand{\mur}{\mu_\R}
\newcommand{\mug}{\mu_\G}
\newcommand{\dx}{\,dx}
\newcommand{\EE}{E}


\theoremstyle{plain} 
\newtheorem{thm}{Theorem}[section] 
\newtheorem{cor}[thm]{Corollary} 
 
\newtheorem{proposition}[thm]{Proposition} 

\theoremstyle{definition}

\theoremstyle{definition}

\theoremstyle{remark} 
\newtheorem{rem}{Remark}[section]


\date{}

\title{Variational and stability properties of constant solutions \\ to the NLS equation on compact metric graphs}

\author{Claudio Cacciapuoti$^\dagger$, Simone Dovetta$^{\sharp}$, Enrico Serra$^\sharp$
	\\ \ \\ \ \\
	{\small  $^\dagger$Dipartimento di Scienza ed Alta Tecnologia} \\
	{\small Universit\`a degli Studi dell'Insubria} \\
	{\small Via Valleggio 11, 22100 Como, Italy} \\ \ \\
	{\small $^\sharp$Dipartimento di Scienze Matematiche ``G.L. Lagrange''}\\
	{\small Politecnico di Torino }\\
	{\small Corso Duca degli Abruzzi 24, 10129 Torino, Italy} \\ \ \\
	}


\begin{document}
	
	\maketitle
	
	\begin{abstract} We consider the nonlinear Schr\"odinger equation with pure power nonlinearity on a general compact metric graph, and in particular its stationary solutions with fixed mass. Since the the graph is compact, for every value of the mass there is a constant solution. Our scope is to analyze (in dependence of the mass) the variational properties of this solution, as a critical point of the energy functional:  local and global minimality, and  (orbital) stability. We consider both the subcritical regime and the critical one, in which the features of the graph become relevant. We describe how the above properties change according to the topology and the metric properties of the graph.
	\end{abstract}

\noindent{\small AMS Subject Classification: 35R02, 35Q55,  49J40, 81Q35.}
\smallskip

\noindent{\small Keywords:   Nonlinear Schr\"odinger equation,  metric graphs, stationary solutions, critical growth, stability.}


\section{Introduction}

Partial differential equations on one-dimensional networks (metric graphs) arise naturally in modeling almost one-dimensional ramified structures. Even though a rigorous justification of the one-dimensional approximation  is, in general, not easily achieved, these models are extensively studied in view of their applications in many different fields,  such as  neurobiology, chemical physics, engineering, and  physics. 

A first application of PDEs on  metric graphs, in chemical physics, to study the spectrum of the naphthalene molecule, dates back to 1953 with the work of Ruedenberg and Scherr \cite{RS53}. Yet rigorous mathematical analysis on these models has been relatively quiescent until the last two decades, which have seen a revived interest in the subject, as testified for example by the conference proceedings \cite{alimehmeti-proc, berkolaiko-etal, exner-etal,  mugnolo-proc}. Nowadays several introductory books and review articles on the mathematical aspects of PDEs on  metric graphs  are available, and we refer e.g. to  \cite{BK, kuchment1, kuchment2, kuchment3, mugnolo}. 

In view of the applications it is natural  to consider the mathematical aspects of   {\em nonlinear} equations on metric graphs. The number of works on this topic is huge and constantly growing, and we will not make an attempt  to fully cover the literature. We shall instead focus our attention on the main subject of our analysis which is the nonlinear Schr\"odinger (NLS) equation with pure power nonlinearity. This  equation,  for a suitable choice of  the power of the nonlinearity,  can for instance be used to model Bose-Einstein condensates in  Josephson junctions, see e.g. \cite{lorenzo-etal}. \\ 

A (connected) metric graph $\mathcal G$ (see Fig. \ref{figuno})  is formed by line segments or half-lines, called \emph{edges}, with some of their endpoints glued together at points called \emph{vertices}. We denote the set of all the edges by $\EE$ and the set of all the vertices by $V$.
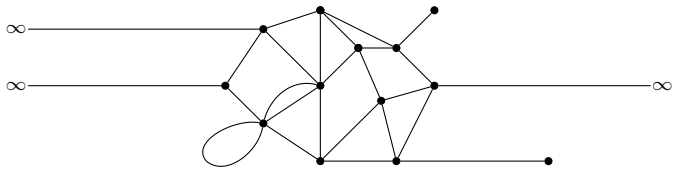
\begin{figure}
\begin{center}
\begin{tikzpicture}[xscale= 0.5,yscale=0.5]
\node at (-.5,2) [nodo] (02) {};
\node at (2,2) [nodo] (22) {};
\node at (2,4) [nodo] (24) {};
\node at (3.6,1.6) [nodo] (42) {};
\node at (3,3) [nodo] (33) {};
\node at (5,2) [nodo] (52) {};
\node at (3,3) [nodo] (32) {};
\node at (4,3) [nodo] (43) {};
\node at (.5,3.5) [nodo] (04) {};
\node at (2,4) [nodo] (24) {};
\node at (.5,1) [nodo] (11) {};
\node at (2,0) [nodo] (20) {};
\node at (4,0) [nodo] (40) {};
\node at (8,0) [nodo] (80) {}; 
\node at (5,4) [nodo] (54) {};
\node at (-6,2) [infinito] (meno) {$\scriptstyle\infty$};
\node at (-6,3.5) [infinito] (menoalt) {$\scriptstyle\infty$};
\node at (11,2) [infinito] (piu) {$\scriptstyle\infty$};

\draw[-] (40)--(80);
\draw[-] (02)--(04);
\draw[-] (04)--(24);
\draw[-] (04)--(22);
\draw[-] (11) to [out=75,in=165] (22);
\draw[-] (24)--(22);
\draw[-] (02)--(11);
\draw[-] (11)--(22);
\draw[-] (11)--(20);
\draw[-] (11) to [out=170,in=130] (-1,0);
\draw[-] (-1,0) to [out=-40,in=260] (11);
\draw[-] (20)--(22);
\draw[-] (22)--(33);
\draw[-] (24)--(33);
\draw[-] (24)--(43);
\draw[-] (33)--(43);
\draw[-] (43)--(52);
\draw[-] (33)--(42);
\draw[-] (20)--(42);
\draw[-] (20)--(40);
\draw[-] (40)--(42);
\draw[-] (42)--(52);
\draw[-] (40)--(52);
\draw[-] (43)--(54);
\draw[-] (02)--(meno);
\draw[-] (52)--(piu);
\draw[-] (04)--(menoalt);
\end{tikzpicture}
\end{center}
\caption{a connected metric graph}
\label{figuno}
\end{figure}

Choosing on it a coordinate $x_e$, each edge $e$ is identified either with an interval $I_e=[0,\ell_e]$, $\ell_e<+\infty$, or with $[0,+\infty)$ in the case of half-lines. 
With the shortest path distance, the set $\G$ becomes a metric space.

A metric graph is {\em compact}, as a metric space, if it has a  finite number of edges and all the edges have finite length (that is, there are no half-lines). In this paper we only consider compact metric graphs.

A function $u$ on $\G$ is a map $u:\G\to \mathbb C$. We denote by $u_e:I_e\to \cc$ the restriction of the function $u$ to the edge $e$, and by  $u'$ and $u''$ the functions with restriction to the edges given by $u_e'$ and $u_e''$. 

Various functional spaces can naturally be defined on $\G$. Indeed, the Lebesgue measure on $\G$ is introduced by using the one-dimensional Lebesgue measure on each interval $I_e$, for every edge $e$, and we let 
\[
\int_\G u \,dx := \sum_{e\in\EE}\int_{I_e} u_e \,dx_e 
\]
denote the integral of a function on $\G$. Then, for all $p\geq 1$, $L^p(\G)$ spaces are naturally defined as the sets 
\[
L^p(\G) := \{u:\G\to\cc\,:\, u_e\in L^p(I_e) \;\forall e\in\EE\}
\]
with corresponding norm $\|u\|_p := \left(\int_\G |u|^p \,dx\right)^{1/p}$. Accordingly, the space $L^2(\G)$ is also endowed with the scalar product $(u,v)_2:=   \int_\G  u \bar v \,dx$. 

Letting  $C(\G)$ be the space of continuous functions on $\G$, the space $H^1(\G)$ is then defined as 
\[
H^1(\G) := \{u:\G\to \cc\,:\, u\in C(\G), \; u_e \in H^1(I_e) \;\forall e\in\EE \}
\]
and it is endowed with the norm 
\[
\|u\|_{H^1(\G)} = \|u'\|_2 +\|u\|_2. 
\] 
Note that the continuity required in the definition of $H^1(\G)$ is a no-jump condition at the vertices.

On the other hand, due to the presence of the vertices,  a natural notion of continuity for the derivative $u'$ is not available. Hence,  also a natural definition of higher order Sobolev spaces is missing. In particular, to define the space $H^2(\G)$ it is necessary to assign a boundary condition on the  values of the  derivatives at the vertices of the graph. We set 
\[
H_K^2(\G) := \Big\{u\in H^1(\G)\;:\; u_e \in H^2(I_e) \; \forall e\in E,\quad \sum_{e\succ \vv} u_e'(\vv)=0\;\; \forall \vv \in V\Big\}, 
\]
where $e\succ \vv$ denotes the set of all the edges which have at least one endpoint coinciding with the vertex $\vv$, and  $u_e'(\vv)$ denotes the derivative in the outgoing direction.

A function $u\in H_K^2(\G)$ is said to satisfy   \emph{Kirchhoff} (also called \emph{free} or \emph{standard})  conditions at the vertices. The operator $-\Delta$ on $L^2(\G) $ defined by  $\textrm{dom}(-\Delta) := H_K^2(\G)$,  $-\Delta := - (\,\cdot\,)''$ is self-adjoint in $L^2(\G)$ and is usually called the  \emph{Kirchhoff} (or  \emph{free} or \emph{standard}) Laplacian. 

There are other choices of boundary conditions at the vertices that define self-adjoint realizations of the Laplacian on a metric graph (in fact also the continuity at the vertices is not a necessary condition). A complete characterization of such self-adjoint boundary  conditions is, for example, in  \cite{BK}. \\

Given a connected metric graph $\G$, the NLS equation we are interested in is 
\begin{equation}
\label{nlse}
i \frac{\partial}{\partial t} u = -  u'' -|u|^{p-2} u,
\end{equation}
whose associated energy functional $E:H^1(\G)\to\rr$ is 
\begin{equation}
\label{EQ-def energy INTRO}
E(u)=\frac{1}{2}\udot^2-\frac{1}{p}\uLp^p=\frac{1}{2}\int_{\G}|u'|^2\,dx-\frac{1}{p}\int_{\G}|u|^p\,dx\,.
\end{equation}
Here we consider both $p\in(2,6)$, the so-called \emph{$L^2$-subcritical regime}, and $p=6$, the \emph{critical regime}.

Of particular interest is the study of the critical points of the energy functional under the  constraint
\begin{equation}
\label{EQ-def mass constraint}
\uLtwo^2=\mu
\end{equation}
on the $L^2$-norm (\emph{mass}), which is a conserved quantity under the NLS flow, as well as the energy itself. Indeed, critical points of \eqref{EQ-def energy INTRO} satisfying \eqref{EQ-def mass constraint} are solutions of the (time-independent) equation 
\begin{equation}\label{stateq}
u''+|u|^{p-2}u-\lambda u = 0, \qquad u\in H_K^2(\G), 
\end{equation}
for some $\lambda \in \rr$ that arises as a Lagrange multiplier due to the presence of the  constraint \eqref{EQ-def mass constraint}.

With every solution $u$ of \eqref{EQ-def mass constraint}-\eqref{stateq} is then associated the time dependent solution of \eqref{nlse} given by $v(t) = e^{i\lambda t}u$. Since these solutions evolve in time simply through a multiplication by a phase factor, solutions of \eqref{stateq} are called \emph{stationary} (or \emph{bound})  \emph{states}. Particularly, we call \emph{ground states} those stationary solutions that globally minimize the energy \eqref{EQ-def energy INTRO} among functions fulfilling \eqref{EQ-def mass constraint}.

The study of the stationary solutions of an NLS equation has an interest in its own, in view of the fact that these  states are associated to physically observable ones. In general one is interested in their existence, multiplicity and stability/instability properties. 
In this context by stability we mean \emph{orbital stability}: a stationary solution $u$ is orbitally stable  if for all $\eps>0$ there exists $\delta >0$ such that if an initial datum $v_0 \in H^1(\G)$ for the NLS equation \eqref{nlse} satisfies $\inf_{\theta \in\rr}\|v_0 -e^{i\theta} u\|_{H^1}<\delta$, then the solution $v$ of \eqref{nlse} with initial condition $v_0$ satisfies $\sup_{t}\inf_{\theta\in\rr }\|v(t)-e^{i\theta} u\|_{H^1}<\eps$.  In other words, if the initial datum is close to the \emph{orbit} $e^{i\theta} u$, $\theta \in \rr$, then the solution to the NLS equation with that datum stays close to the orbit. \\

The analysis of the problem of the existence of stationary solutions for the NLS equation on metric graphs has been widely investigated on non-compact metric graphs first, with a prominent focus on  ground states. The existence of mass-constrained ground states was initiated in the series of works \cite{ACFN-jpa12, ACFN-epl12, ACFN-jde14, ACFN-aihp14}, for the special case of a star-graph (a graph obtained by merging $N$ half-lines). For generic non-compact metric graphs the problem was addressed in \cite{AST2015, AST-jfa16} in the subcritical regime and in \cite{AST-cmp17} in the critical one (see also \cite{li-li-shi}).  We also note the paper \cite{CFN-pre15}, where the stationary solutions for the NLS equation on the \emph{tadpole graph} (a loop with one half-line attached to it) were characterized  and  where  the existence of the ground state was only conjectured (the conjecture was then confirmed in  \cite{AST-jfa16}).  In the same setting, the stability/instability properties of the stationary states were analyzed in \cite{NPS-non15}.  

Related works on the existence of ground states on non-compact graphs are: \cite{DT-rxv18, ST-jde16, ST-na16, tentarelli-jmaa16} in a setting in which the nonlinearity is supported only on a compact part of the graph;  \cite{ADST-rxv18} considering a doubly periodic graph. We also mention \cite{astbound}, where multiple {\em bound} states are found as local, non necessarily global, minimizers of the energy functional.

For further discussions and additional references we refer to the review articles \cite{AST-ln, noja-rev}.\\

\section{Main results}

In this paper we only deal with compact graphs.  As far as ground states are concerned, in the subcritical regime $p \in (2,6)$ their existence for every value of the mass $\mu$ is immediately granted by the compactness of the graph, which entails the compactness of the embedding of $H^1(\G)$ into  $L^q(\G)$ for every $q$, see \cite{dovetta}. In the critical regime $p=6$ instead, the existence of a ground state is not a trivial matter. 

Postponing the details to Section \ref{sec:prelim}, we recall that it has been recently proved in \cite{dovetta} that in the critical regime a ground state exists if and only if the mass does not exceed a threshold value depending on the topology of the graph. More precisely, this threshold equals the value $\mu_{\R^+}$, the critical mass on the half-line, if $\G$ has at least a terminal edge (i.e., an edge ending into a vertex of degree 1, see Figure \ref{terminal}), whereas it equals $\mu_\rr$, the critical mass on the real line, if there is no edge of this kind (see for instance \cite{AST-cmp17} and Section \ref{sec:prelim} here for further details on $\mu_{\R^+}$ and $\mu_{\rr}$).

Turning to bound states, we point out that a detailed analysis of the stationary solutions and their stability properties has been so far carried out only  for the cubic NLS on {\em specific} graphs, such as the  dumbbell graph (see Figure \ref{bridge}, left)  in \cite{MP-amre16} (we remark that  the published paper contained an error that was later corrected on the arXiv version \cite{MP-amre16_c}). We  also note the work  \cite{goodman-rxv17} which analyzes  the NLS equation on  the dumbbell graph in relation with a discrete equation on the bowtie graph.

Our interest in this paper is on the contrary mainly devoted to a {\em generic} compact graph $\G$, with the intent of providing  information of variational nature on solutions of \eqref{stateq} that are constant on the whole graph. 

Up to multiplication by a phase, we may assume that these solutions are real and positive. So, letting $\ell:=|\G|$ be  the total length of the graph, the constant function $\cost$ given by

\begin{equation}
\label{cost}
\cost:=\sqrt{\frac{\mu}{\ell}}
\end{equation}
is always a solution of \eqref{EQ-def mass constraint}-\eqref{stateq} for a proper value of $\lambda$. We aim at understanding whether it is a ground state or not and if it is stable, both in the subcritical and in the critical regime.

We introduce the manifold
\[
\HmuG:=\{u\in H^1(\G)\,:\,\uLtwo^2=\mu\}
\]
of all functions in $H^1(\G)$ satisfying \eqref{EQ-def mass constraint}, and we note that $\cost\in\HmuG$.

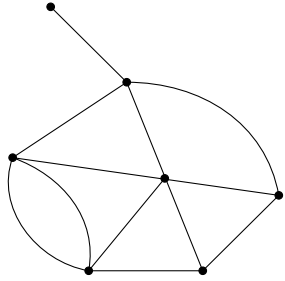
\begin{figure}[t]
	\centering
		\begin{tikzpicture}[xscale=0.5,yscale=0.5]
	
	\draw[fill] (-1,7) circle (0.1);
	\draw[fill] (-2,3) circle (0.1);
	\draw[fill] (0,0) circle (0.1);
	\draw[fill] (3,0) circle (0.1);
	\draw[fill] (5,2) circle (0.1);
	\draw[fill] (1,5) circle (0.1);
	\draw[fill] (2,2.45) circle (0.1);
	
	\draw (-1,7)--(1,5);
	\draw (1,5)--(-2,3);
	\draw (0,0) to [out=170,in=250] (-2,3);
	\draw (0,0) to [out=80,in=-20] (-2,3);
	\draw (0,0)--(3,0);
	\draw (3,0)--(1,5);
	\draw (3,0)--(5,2);
	\draw (5,2) to [out=100,in=0] (1,5);
	\draw (-2,3)--(5,2);
	\draw (0,0)--(2,2.45);
	
	\end{tikzpicture}
	\caption{a compact graph with a terminal edge}
	\label{terminal}
\end{figure}

Our first result provides an insight into the variational properties of $\kappa_\mu$, showing that, if the mass is small enough, it locally minimizes the energy in $\HmuG$ and it is stable, while this fails to be true when a threshold value of the mass is exceeded. 

\begin{thm}
\label{THM 1}
Let $p\in(2,6]$ and let $\G$ be a compact graph. Then there exists $\mu_1=\mu_1(\G,p)>0$ such that
\begin{itemize}
\item[(i)] if $\mu<\mu_1$, then $\cost$ is a local minimizer of $E$ in $\HmuG$ and it is orbitally stable;
\item[(ii)] if $\mu>\mu_1$, then $\cost$ is not a local minimizer of $E$ in $\HmuG$ and it is orbitally unstable.
\end{itemize}
\end{thm}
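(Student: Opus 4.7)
The plan is to reduce the local analysis of $E$ on $\HmuG$ near $\cost$ to a spectral problem for the Kirchhoff Laplacian on $\G$. Parametrize admissible real variations by $\varphi\in H^1(\G)$ with $\int_\G \varphi\dx=0$ (the imaginary direction is tangent to the phase orbit and contributes zero to the constrained Hessian), set $v_t:=\cost+t\varphi$, and project onto the mass sphere via $u_t:=\sqrt{\mu}\,v_t/\|v_t\|_2$. Using $\cost'=0$, $\|v_t\|_2^2=\mu+t^2\|\varphi\|_2^2$, and the pointwise positivity of $v_t$ for small $t$, a direct Taylor expansion yields
\begin{equation*}
E(u_t)=E(\cost)+\frac{t^2}{2}\,Q(\varphi)+o(t^2),\qquad Q(\varphi):=\|\varphi'\|_2^2-(p-2)\,\cost^{p-2}\,\|\varphi\|_2^2.
\end{equation*}
Since $\G$ is connected and compact, the Kirchhoff Laplacian has a simple zero eigenvalue (the constants), and its first positive eigenvalue is characterized by $\lambda_2(\G)=\inf\{\|\varphi'\|_2^2:\|\varphi\|_2=1,\ \int_\G\varphi\dx=0\}$. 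Hence $Q$ is strictly positive definite on the tangent space iff $(p-2)\cost^{p-2}<\lambda_2(\G)$, which, since $\cost=\sqrt{\mu/\ell}$, fixes the explicit threshold
\begin{equation*}
\mu_1:=\ell\,\bigl(\lambda_2(\G)/(p-2)\bigr)^{2/(p-2)}.
\end{equation*}

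For (i), I would promote this infinitesimal positivity to a coercive local inequality. Writing $u\in\HmuG$ near the orbit as $u=e^{i\theta}(\cost+\psi)$ with $\int_\G\psi\dx=0$ (selecting $\theta$ to cancel the tangential component along the phase orbit) and controlling the cubic and higher-order remainders via Gagliardo--Nirenberg on $\G$---which, by compactness of $\G$, gives a compact embedding $H^1(\G)\hookrightarrow L^p(\G)$ even at $p=6$---one obtains $E(u)-E(\cost)\ge c\,\|\psi\|_{H^1(\G)}^2$ on a sufficiently small $H^1$-neighborhood of the orbit. Orbital stability then follows by the classical Cazenave--Lions conservation argument: given $\eps>0$, choose $\delta$ so small that initial data within distance $\delta$ of the orbit have mass $\mu$ and energy close to $E(\cost)$; conservation of mass and energy along the NLS flow, combined with the coercive inequality, confines the trajectory to the $\eps$-neighborhood of the orbit for all times.

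For (ii), choosing $\varphi$ to be an eigenfunction realizing $\lambda_2(\G)$ makes $Q(\varphi)<0$, so the Taylor expansion gives $E(u_t)<E(\cost)$ for arbitrarily small $t\ne 0$, immediately ruling out local minimality. Orbital instability then follows from a Grillakis--Shatah--Strauss type argument: $\cost$ is a constrained critical point with strictly negative Morse index (the Lagrange multiplier for $\cost$ being $\lambda=\cost^{p-2}$, the linearized operator on the tangent space is $-\partial_x^2-(p-2)\cost^{p-2}$, which has a negative eigenvalue when $\mu>\mu_1$), and the corresponding spectral criterion yields existence of initial data arbitrarily close to the orbit whose trajectories drift away from it. The main obstacle I anticipate is the critical case $p=6$: although compactness of $\G$ salvages the embedding, the higher-order remainders in the Taylor expansion and the energy gap needed to upgrade variational instability to a dynamical one rely on Gagliardo--Nirenberg estimates whose constants are delicate at $p=6$, so uniformity of the remainders on small $H^1$-balls must be verified carefully; a secondary point is ensuring enough global well-posedness of the NLS flow near $\cost$ for the stability/instability statements to be dynamically meaningful in the critical regime.
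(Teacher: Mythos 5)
Your proposal follows essentially the same route as the paper: the same projection of $\cost+t\varphi$ onto the mass sphere, the same quadratic form $Q(\varphi)=\|\varphi'\|_2^2-(p-2)\cost^{p-2}\|\varphi\|_2^2$, the same threshold $\mu_1=\ell\bigl(\lambda_2(\G)/(p-2)\bigr)^{2/(p-2)}$, and the same test with a $\lambda_2$-eigenfunction for part (ii). Two remarks. First, your claim that the imaginary direction ``contributes zero to the constrained Hessian'' is not accurate: only the \emph{constant} imaginary direction is tangent to the phase orbit; a general complex perturbation $\varphi$ contributes $\int_\G|({\rm Im}\,\varphi)'|^2\dx\ge 0$, as in the paper's formula $\overline E''(\cost)\varphi^2=\int_\G|\varphi'|^2\dx-(p-2)\cost^{p-2}\int_\G|{\rm Re}(\varphi)|^2\dx$. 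The conclusion is unaffected (that term is nonnegative and vanishes only on constants), but local minimality must be checked against all complex perturbations, not just real ones. Second, for stability the paper does not establish your coercive inequality $E(u)-E(\cost)\ge c\|\psi\|_{H^1}^2$ and run a Cazenave--Lions argument; instead it counts negative eigenvalues of the linearization $H_\lambda=\mathrm{diag}(L_1,L_0)$, uses that a constrained local minimizer forces $H_\lambda$ to have at most one negative eigenvalue (Prop.\ B1 of Fr\"ohlich--Gustafson--Jonsson--Sigal) while $L_1$ always has the negative eigenvalue $-(p-2)\lambda$, and then applies Grillakis--Shatah--Strauss. Your route is viable but requires the extra work of justifying the coercivity uniformly near the orbit (including at $p=6$), which the spectral-counting route sidesteps; conversely, your approach would make the local minimality claim in (i) more self-contained, since strict positivity of the second derivative alone does not formally imply local minimality in infinite dimensions--a point the paper itself treats rather briskly.
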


Pushing further the investigation, we show that $\cost$ is actually a ground state of $E$ provided that the mass is sufficiently small, possibly strictly smaller than the value $\mu_1(\G,p)$ identified in Theorem \ref{THM 1}.

\begin{thm}
\label{THM 2}
Let $p\in(2,6]$ and let $\G$ be a compact graph. Then there exists $\mu_2=\mu_2(\G,p)>0$ such that for every $\mu<\mu_2$, $\cost$ is a ground state of $E$ in $\HmuG$. 
\end{thm}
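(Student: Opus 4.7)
The plan is to combine the local analysis of Theorem \ref{THM 1} with the existence and the asymptotic behaviour of a ground state as $\mu\to 0$. First I would exhibit, for $\mu$ sufficiently small, a ground state $u_\mu$ of $E$ in $\HmuG$: in the subcritical regime $p\in(2,6)$ this is immediate from the compact embedding $H^1(\G)\hookrightarrow L^p(\G)$ recalled in Section \ref{sec:prelim}; in the critical regime $p=6$ it follows from the existence result of \cite{dovetta} and forces $\mu_2$ to be taken below the corresponding threshold mass ($\mup$ or $\mur$, according to the topology of $\G$). The diamagnetic inequality on metric graphs allows us to restrict attention to real non-negative $u_\mu$.

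The core of the argument is then the quantitative convergence $\|u_\mu-\cost\|_{H^1(\G)}\to 0$ as $\mu\to 0$. I would derive this from a Gagliardo-Nirenberg inequality on the compact graph, of the schematic form $\|u\|_p^p\le K_1\udot^{(p-2)/2}\uLtwo^{(p+2)/2}+K_2\uLtwo^p$ in the subcritical range and $\uLsix^6\le K_1\udot^2\uLtwo^4+K_2\uLtwo^6$ at the critical exponent (see \cite{dovetta} and Section \ref{sec:prelim}). Plugging this into the comparison $E(u_\mu)\le E(\cost)=-\frac{\ell}{p}\cost^p$ and rearranging yields an a priori bound of the form $\|u_\mu'\|_2^2=o(\mu)$, so that $\|u_\mu'\|_2/\cost\to 0$. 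Decomposing $u_\mu=\bar u_\mu+w_\mu$ with $\bar u_\mu$ constant and $\int_\G w_\mu\,dx=0$, the Poincar\'e inequality for zero-mean functions on the connected graph gives $\|w_\mu\|_2\lesssim\|u_\mu'\|_2$, while the mass constraint forces $\bar u_\mu\to\cost$; together these produce $\|u_\mu-\cost\|_{H^1(\G)}\to 0$ at a quantitative rate.

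The proof concludes by appealing to Theorem \ref{THM 1}: for $\mu<\mu_1(\G,p)$ the constant $\cost$ is a strict local minimizer of $E$ on $\HmuG$, and a careful inspection of its proof yields a basin of local minimality whose radius in $H^1(\G)$ dominates the rate of convergence $\|u_\mu-\cost\|_{H^1(\G)}$ established above. Hence, for $\mu$ small enough, $u_\mu$ lies inside this basin, and the two inequalities $E(u_\mu)\le E(\cost)$ (global minimality of $u_\mu$) and $E(u_\mu)\ge E(\cost)$ with equality only at $\cost$ (strict local minimality) force $u_\mu=\cost$. This determines the threshold $\mu_2$.

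The main obstacle is the a priori estimate of the second step, and in particular its implementation in the critical regime $p=6$, where the constant multiplying $\udot^2\uLtwo^4$ in the Gagliardo-Nirenberg inequality of \cite{dovetta} is sharp precisely at the threshold mass: one must verify that, for $\mu$ sufficiently small, this constant produces a coefficient strictly less than $1/6$ against the kinetic term, which is exactly what rules out concentration and yields the quantitative rate $\|u_\mu'\|_2^2=O(\mu^3)$ needed to push $u_\mu$ into the basin of $\cost$.
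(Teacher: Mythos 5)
Your overall strategy (quantitative convergence of the ground state to $\cost$ plus strict local minimality of $\cost$) is genuinely different from the paper's, which instead works at the level of the Euler--Lagrange equation: the paper applies a Crandall--Rabinowitz type bifurcation theorem to $F(\lambda,u)=u''+|u|^{p-2}u-\lambda u$ on $\rr\times H^2_K(\G)$ at $(0,0)$, identifies the unique local curve of nontrivial zeros with the constants $(\lambda,\pm\lambda^{1/(p-2)})$, and then shows that any sequence of ground states $u_n$ with $\mu_n\to 0$ satisfies $(\lambda_n,u_n)\to(0,0)$ in $\rr\times H^2_K(\G)$ (using the Gagliardo--Nirenberg bounds to get $\|u_n'\|_2\to 0$, then $\lambda_n\to 0$, then $H^2$ convergence from the equation). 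Local uniqueness of the solution curve then forces $u_n=\kappa_{\mu_n}$ with no need to quantify any basin of minimality.

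The gap in your version is precisely at the step you relegate to ``a careful inspection'' of the proof of Theorem \ref{THM 1}. That proof only establishes positivity of the quadratic form $\overline E''(\cost)$ on $T_{\cost}\HmuG$; it contains no remainder estimate and therefore yields no radius for the basin of local minimality. To make your argument close, you must prove a quantitative statement of the form: there are $c,\,r(\mu)>0$ such that $E(u)-E(\cost)\ge c\|u-\cost\|_{H^1}^2$ for all real nonnegative $u\in\HmuG$ with $\|u-\cost\|_{H^1}\le r(\mu)$, and then check $r(\mu)\gg\|u_\mu-\cost\|_{H^1}\sim\mu^{p/4}$. This requires bounding the third-order Taylor remainder of $t\mapsto|t|^p$ around $\cost$, whose natural constant is $\cost^{p-3}\sim\mu^{(p-3)/2}$ (degenerating as $\mu\to0$ when $p>3$, and with only H\"older regularity of the second derivative when $p<3$), together with the correction coming from projecting $\cost+\varphi$ back onto the sphere $\HmuG$ and a phase normalization killing the degenerate direction $i\cdot\mathrm{const}$. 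The exponents do work out ($\cost^{p-3}\|\varphi\|_\infty\sim\mu^{(3p-6)/4}\to0$ for $p>2$), so the route is salvageable, but this computation is the actual content of the proof and is entirely absent. A minor additional remark: your concern about the sharp constant in the critical Gagliardo--Nirenberg inequality is a non-issue here, since for $\mu\to 0$ even the non-sharp inequality \eqref{EQ-GN subcritical} with $p=6$ gives $\|u_\mu'\|_2^2=O(\mu^3)$; the sharp modified versions \eqref{EQ-modified GN tip}--\eqref{EQ-modified GN no tip} are only needed near the threshold masses.
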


Let us highlight that both the previous theorems work  up to $p=6$, so that no difference between the subcritical and the critical regime seems to arise so far. However, in the subcritical regime, a natural notion of homothety deserves a few comments. Indeed, for $p\in(2,6)$, setting $\alpha=\frac{2}{6-p}$ and $\beta=\frac{p-2}{6-p}$, the normalized energy $\mu^{-2\beta-1}E(u)$ is invariant with respect to the transformations
\[
\G\mapsto t^{-\beta}\G,\qquad\qquad u(\cdot)\mapsto t^\alpha u(t^\beta\cdot),
\]
for every $u\in H^1(\G)$ and $t>0$. Note that such a scaling maps $\mu$ into $t\mu$ and any length in the graph, say $\ell$, into $t^{-\beta}\ell$. Therefore, it turns out that dealing with mass $\mu$ on $\G$ is equivalent to dealing with mass $t\mu$ on $t^{-\beta}\G$, as the product $\mu^\beta\ell$ is scale invariant. Hence, all the mass thresholds we identify in the subcritical regime should be properly rewritten as conditions on the term $\mu^\beta\ell$ (see remark \ref{REMARK-second eigenvalue and critical mass} below).

On the contrary, when $p=6$, this rigidity between the mass and the metric properties of the graph disappears. The critical setting requires a finer analysis of the actual value of the thresholds $\mu_1(\G,6)$ and $\mu_2(\G,6)$, due to the fact that the existence of the ground states is granted only for masses below either $\mu_{\R^+}$ or $\mu_{\rr}$, depending on $\G$. 

The following results establish lower bounds on $\mu_1(\G,6)$ based on  topological properties of $\G$. 

Recall that a graph is said to admit a \emph{cycle covering} if it can be covered by cycles (i.e. loops without edge repetitions, see \cite{AST-cmp17} and Figure \ref{cycles}).
\begin{figure}
	\centering
	\subfloat{\begin{tikzpicture}[xscale=0.4,yscale=0.4]
			
			\node at (2,0) {};
			\node at (4,0) {};
			
			\draw[fill] (2,0) circle (0.1);
			\draw[fill] (6,0) circle (0.1);
			\draw (0,0) circle (2);
			\draw (2,0)--(6,0);
			\draw (8,0) circle (2);
			
			\end{tikzpicture}} \qquad\qquad
	\subfloat{\begin{tikzpicture}[xscale=0.4,yscale=0.4]
			
			\draw[fill] (0,0) circle (0.1);
			\draw[fill] (1.5,2) circle (0.1);
			\draw[fill] (-2,1.5) circle (0.1);
			\draw[fill] (-0.5,3.5) circle (0.1);
			\draw[fill] (-0.25,1.75) circle (0.1);
			
			\draw (0,0)--(1.5,2);
			\draw (0,0)--(-2,1.5);
			\draw (0,0)--(-0.5,3.5);
			\draw (1.5,2)--(-2,1.5);
			\draw (1.5,2)--(-0.5,3.5);
			\draw (-2,1.5)--(-0.5,3.5);
			
			\draw[fill] (4.5,2) circle (0.1);
			\draw[fill] (6.5,4) circle (0.1);
			\draw[fill] (8.5,2) circle (0.1);
			\draw[fill] (6.5,0) circle (0.1);
			
			\draw (6.5,2) circle (2);
			
			\draw (4.5,2)--(1.5,2);
			\draw (4.5,2)--(6.5,4);
			\draw (4.5,2)--(6.5,0);
			
			\draw (8.5,2)-- (6.5,4);
			\draw (6.5,4)--(6.5,0);
			\draw (8.5,2)-- (6.5,0);
			
			\end{tikzpicture}}
	
	\caption{Examples of graphs with no terminal edge nor cycle covering}
	\label{bridge}
\end{figure}
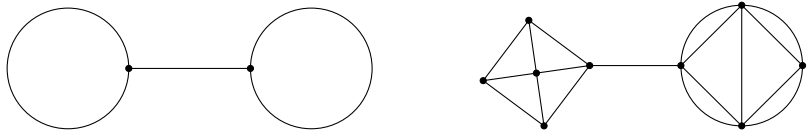

\begin{thm}
\label{THM 3}
Let $p=6$. Then, for every compact graph $\G$, $\mu_1(\G,6) \geq\frac{\pi}{2}$. Moreover, if $\G$ admits a cycle covering, then $\mu_1(\G,6)\geq\pi$.
\end{thm}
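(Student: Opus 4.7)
The plan is to characterize $\mu_1(\G,6)$ through the second variation of $E$ at $\cost$ on $\HmuG$, and then to invoke Poincar\'e-type inequalities for the Kirchhoff Laplacian on compact metric graphs to produce the claimed lower bounds.

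Building on the analysis of the second variation carried out in the proof of Theorem~\ref{THM 1}, the threshold $\mu_1(\G,6)$ can be expressed through the first nonzero eigenvalue $\lambda_2(\G)$ of the Kirchhoff Laplacian on $\G$. Indeed, for $p=6$ the Lagrange multiplier associated with $\cost$ is $\lambda=-\cost^{\,4}=-(\mu/\ell)^2$, and a direct computation shows that a real tangent perturbation $v$ at $\cost$ (i.e.\ $\int_\G v\,dx=0$) produces the quadratic form
\[
Q_\mu(v)\;=\;\int_\G|v'|^2\,dx\;-\;4\Big(\frac{\mu}{\ell}\Big)^2\int_\G v^2\,dx,
\]
while on purely imaginary directions the second variation reduces to $\int_\G |v'|^2\,dx\geq 0$, vanishing only on the orbit direction $v=i\cost$. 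Hence this second variation is positive definite modulo the orbit if and only if $\lambda_2(\G)>4(\mu/\ell)^2$, equivalently $\mu<(\ell/2)\sqrt{\lambda_2(\G)}$. By Theorem~\ref{THM 1}, any such $\mu$ lies below $\mu_1(\G,6)$, whence
\[
\mu_1(\G,6)\;\geq\;\frac{\ell}{2}\sqrt{\lambda_2(\G)}.
\]

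The remaining task is to bound $\lambda_2(\G)$ from below purely in terms of $\ell$. For any compact metric graph with Kirchhoff conditions the classical Poincar\'e-type inequality $\lambda_2(\G)\geq\pi^2/\ell^2$ holds, sharp on intervals of length $\ell$; plugged into the previous display, it yields $\mu_1(\G,6)\geq\pi/2$. If, in addition, $\G$ admits a cycle covering (equivalently, has no bridges), the improved estimate $\lambda_2(\G)\geq 4\pi^2/\ell^2$ is available, sharp on circles of length $\ell$, and gives $\mu_1(\G,6)\geq\pi$.

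The main obstacle in this plan is precisely the improved spectral bound for cycle-covered graphs. While $\lambda_2\geq\pi^2/\ell^2$ follows from a standard ``unfolding'' of $\G$ onto an interval of the same total length (preserving the mean-zero condition and not increasing the Rayleigh quotient of a minimizing test function), the bound $\lambda_2\geq 4\pi^2/\ell^2$ requires a more delicate symmetrization that exploits the absence of bridges in order to unfold $\G$ onto a single circle of length $\ell$, thereby gaining the extra factor of $4$ in the spectral gap. Once both spectral inequalities are in hand, the conclusion is immediate from the spectral characterization in the first step.
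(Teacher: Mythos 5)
Your reduction of the threshold to a spectral inequality is exactly the paper's: by \eqref{mu2} with $p=6$ one has $\mu_1(\G,6)=\ell\,(\lambda_2(\G)/4)^{1/2}$ (in fact an equality, not just the inequality you state), and the general bound $\mu_1\geq\pi/2$ then follows from $\lambda_2(\G)\geq\pi^2/\ell^2$, which is the estimate \eqref{estla2} the paper takes from \cite{friedlander}. So the first half of the theorem is complete. (Minor point: the Lagrange multiplier of $\cost$ is $\lambda=+\cost^{p-2}$, not $-\cost^4$; your quadratic form is nevertheless the correct one from Proposition \ref{seconder}.)

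The second half, however, rests entirely on the improved estimate $\lambda_2(\G)\geq 4\pi^2/\ell^2$ for graphs admitting a cycle covering, and this is precisely what you leave unproved, flagging it yourself as ``the main obstacle.'' The mechanism you hint at --- unfolding $\G$ onto a single circle of length $\ell$ --- does not work in general: a bridgeless graph need not admit a closed walk traversing every edge exactly once (that would require all vertices to have even degree), so there is no circle of length $\ell$ onto which $\G$ unfolds. The paper's argument is different and is the real content of this part of the proof: given a zero-mean test function $\phi$, one observes that a cycle covering forces almost every level of $\phi_i^{\pm}$ ($i=1,2$ the real and imaginary parts) to have at least two preimages; one then takes the \emph{symmetric} rearrangements of $\phi_i^{+}$ and $-\phi_i^{-}$ and concatenates them into a function $\psi_i\in H^1_0(0,\ell)$ with zero mean, the doubling of preimages being exactly what makes the rearranged derivative norm not increase while landing in $H^1_0$ of an interval of the \emph{same} total length. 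Since
\[
\inf\Big\{\textstyle\int_0^\ell|\psi'|^2\dx\,:\,\psi\in H^1_0(0,\ell),\ \int_0^\ell\psi\dx=0,\ \int_0^\ell|\psi|^2\dx=1\Big\}=\frac{4\pi^2}{\ell^2},
\]
this yields $\lambda_2(\G)\geq 4\pi^2/\ell^2$ and hence $\mu_1\geq\pi$. Alternatively, you could close the gap by citing this spectral inequality for bridgeless graphs from \cite{bandlevy} or \cite{berkolaiko-mugnolo}, but as written your proposal neither proves it nor points to a precise source, and the proof sketch you do give would fail.
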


From a direct comparison between the above estimates and the values $\mu_{\R^+},\,\mu_{\rr}$ (reported  in  \eqref{massecrit}),  the next corollary can be easily proved.

\begin{cor}
\label{COR 1}
Let $p=6$, let $\G$ be a compact graph and write $\mu_1 = \mu_1(\G,6)$.
\begin{itemize}
\item[(i)] If $\G$ has a terminal edge, then for every $\mu\in(\mu_{\rr^+},\mu_1)$ there exists a solution of \eqref{EQ-def mass constraint}-\eqref{stateq} which is orbitally stable, i.e., $\cost$;
\item[(ii)] if $\G$ admits a cycle covering, then for every $\mu\in(\mu_\rr,\mu_1)$ there exists a solution of \eqref{EQ-def mass constraint}-\eqref{stateq} which is orbitally stable, i.e., $\cost$.
\end{itemize}
\end{cor}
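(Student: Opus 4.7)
The corollary is essentially a bookkeeping consequence of Theorem \ref{THM 1}(i) and Theorem \ref{THM 3}, combined with the explicit numerical values of the critical masses $\mu_{\R^+}$ and $\mu_\R$ recorded in \eqref{massecrit}. My plan is to treat it as exactly that: a chained inequality argument with no new analytic content.

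First, I would recall that for the focusing critical NLS on the line and half-line the critical masses are, up to the standard computation from the soliton profile, of the form $\mu_{\R} = \pi\sqrt{3}/2$ and $\mu_{\R^+} = \mu_\R/2 = \pi\sqrt{3}/4$. Since $\sqrt{3}<2$, one has the two strict inequalities $\mu_{\R^+}<\pi/2$ and $\mu_\R<\pi$. In particular both of the intervals appearing in the statement are non-empty, so the claim is not vacuous.

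For part (i), suppose $\G$ has a terminal edge and fix $\mu\in(\mu_{\R^+},\mu_1)$. By Theorem \ref{THM 3} we have $\mu_1=\mu_1(\G,6)\geq \pi/2 > \mu_{\R^+}$, so the interval is well-defined; moreover, by hypothesis, $\mu<\mu_1$. Applying Theorem \ref{THM 1}(i) with $p=6$ immediately yields that $\cost$ is a local minimizer of $E$ on $\HmuG$ and is orbitally stable. Since the constant $\cost=\sqrt{\mu/\ell}$ trivially satisfies \eqref{stateq} with Lagrange multiplier $\lambda=\cost^{p-2}$ (as $\cost''\equiv 0$ and the Kirchhoff conditions are automatic), this produces the required orbitally stable stationary solution. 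Part (ii) is identical, except that the cycle-covering assumption upgrades the bound from Theorem \ref{THM 3} to $\mu_1\geq\pi>\mu_\R$, so again $\mu<\mu_1$ for every $\mu\in(\mu_\R,\mu_1)$ and Theorem \ref{THM 1}(i) applies verbatim.

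There is no real obstacle to overcome: all the work has already been done in Theorems \ref{THM 1} and \ref{THM 3}, and the proof reduces to checking the two elementary numerical comparisons $\mu_{\R^+}<\pi/2$ and $\mu_\R<\pi$. The noteworthy conceptual point, worth emphasizing in a final remark rather than in the proof itself, is that these mass windows lie precisely in the range where ground states are known not to exist (by the thresholds $\mu_{\R^+}$ and $\mu_\R$ recalled in the introduction); thus the corollary identifies an orbitally stable bound state, namely $\cost$, exactly in the regime where the standard minimization approach fails to produce one.
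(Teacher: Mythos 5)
Your proof is correct and follows essentially the same route as the paper: invoke Theorem \ref{THM 1}(i) for stability of $\cost$ on $(0,\mu_1)$, then use Theorem \ref{THM 3} together with the numerical comparisons $\mu_{\R^+}=\pi\sqrt3/4<\pi/2$ and $\mu_\R=\pi\sqrt3/2<\pi$ to see that the stated intervals sit inside $(0,\mu_1)$. The paper's own proof is exactly this two-line chain of inequalities, so no further comment is needed.
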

Some comments are in order. First, Corollary \ref{COR 1} establishes the existence of stable solutions also in mass regimes where there is no ground state (the functional $E$ is unbounded from below on $H_\mu^1(\G)$ for $\mu>\mup$ or $\mur$), and therefore there is no obvious candidate for stability.

Secondly, combining Theorem \ref{THM 1} and  Corollary \ref{COR 1}, we obtain that 
when $\G$ has either a terminal edge or a cycle covering, the constant function $\cost$ is a local minimum of the energy in $\HmuG$ for all the masses at which ground states exist. This naturally raises the question whether $\cost$ is actually a ground state or not. In the second case this would mean that for masses
between $\mu_2(\G,6)$ and $\mup$ (or $\mur$, depending on the topology of $\G$) the ground state is {\em not} constant, though $\cost$ is always a local minimum of $E$. For this kind of graphs, up to now this is still an open problem (see Remark \ref{REM-constant ground states} below).
\medskip

For other classes of graphs, however, the preceding question can sometimes be answered, showing at the same time that, in addition to topology, metric properties of the graph can play a role too.
We therefore complete our analysis with the case of  graphs with no terminal edges nor cycle coverings. Under these assumptions, it is known that there always exists at least one {\em bridging edge}, whose removal disconnects the graph into two disjoint components. Moreover, as terminal edges are not allowed, both these connected components are different from a single vertex. 

Let  $\G_\ell$ denote a graph without terminal edges and cycle coverings, whose longest bridging edge has length $\ell$. We assume $\ell$ to be variable and
we describe the asymptotic  relation between $\ell$ and the threshold value $\mu_1(\G_\ell,6)$ of Theorem \ref{THM 1}.

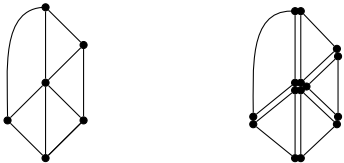
\begin{figure}[t]
	\centering
	\subfloat{
		\begin{tikzpicture}[xscale= 0.5,yscale=0.5]

		\node at (0,0) [nodo] (00) {};
		\node at (1,1) [nodo] (11) {};
		\node at (0,2) [nodo] (02) {};
		\node at (0,4) [nodo] (04) {};
		\node at (1,3) [nodo] (13) {};
		\node at (-1,1) [nodo] (-11) {};

		\draw[-] (00)--(11);
		\draw[-] (00)--(-11);
		\draw[-] (00)--(02);
		\draw[-] (-11)--(02);
		\draw[-] (02)--(11);
		\draw[-] (00)--(11);
		\draw[-] (04)--(13);
		\draw[-] (04)--(02);
		\draw[-] (13)--(11);
		\draw[-] (13)--(02);
		\draw (0,4) to [out=180, in=90] (-11);

		\end{tikzpicture}}\qquad\qquad\qquad
	\subfloat{\begin{tikzpicture}[xscale= 0.5,yscale=0.5]

		\node at (-1.2,1.2) [nodo] (d){};
		\node at (-1.2,1) [nodo] (f){};
		\node at (-.1,4) [nodo] (g){};
		\node at (-.1,2.1) [nodo] (h){};
		\node at (-.1,1.9) [nodo] (i){};
		\node at (-.1,.1) [nodo] (l){};
		\node at (.05,.1) [nodo] (n){};
		\node at (.05,1.9) [nodo] (o){};
		\node at (.2,2) [nodo] (p){};
		\node at (.05,2.1) [nodo] (q){};
		\node at (.05,4) [nodo] (r){};
		\node at (1,3) [nodo] (s){};
		\node at (1.03,2.8) [nodo] (t){};
		\node at (1.03,1.2) [nodo] (u){};
		\node at (1,1) [nodo] (v){};

		\draw (g) to [out=180, in=90] (d);

		\draw[-,] (d)--(h);
		\draw[-] (h)--(g);

		\draw[-] (f)--(l);
		\draw[-] (l)--(i);
		\draw[-] (i)--(f);
		
		\draw[-,] (r)--(s);
		\draw[-] (s)--(q);
		\draw[-] (q)--(r);
		
		\draw[-] (p)--(u);
		\draw[-] (u)--(t);
		\draw[-] (t)--(p);
		
		\draw[-] (v)--(o);
		\draw[-] (o)--(n);
		\draw[-] (n)--(v);

		\end{tikzpicture}}
	\caption{Example of a graph and a possible cycle covering of it}
	\label{cycles}
\end{figure}

\begin{thm}
\label{THM 4}
Let $p=6$. Then
\[
\lim_{\ell\to+\infty}\mu_1(\G_\ell,6)=\frac{\pi}{2}\qquad\text{and}\qquad\liminf_{\ell\to0}\mu_1(\G_\ell,6)\geq\pi.
\]
\end{thm}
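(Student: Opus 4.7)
The plan rests on the characterization of $\mu_1(\G,6)$ that emerges from the second-order analysis at $\cost$ underlying Theorem \ref{THM 1}: for any compact connected metric graph $\G$,
\[
\mu_1(\G,6) = \frac{|\G|\sqrt{\lambda_2(\G)}}{2},
\]
where $|\G|$ is the total length of $\G$ and $\lambda_2(\G)$ is the smallest positive eigenvalue of the Kirchhoff Laplacian on $\G$. Indeed, the constrained second variation of $E$ at $\cost$ along a direction $v$ orthogonal to constants equals $\int_\G|v'|^2\,dx-4\cost^4\int_\G v^2\,dx$, and it is non-negative exactly when $\lambda_2(\G)\geq 4(\mu/|\G|)^2$. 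Both asymptotic statements thus reduce to tracking $|\G_\ell|$ and $\lambda_2(\G_\ell)$ as the longest bridging edge varies.

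For $\ell\to+\infty$, the lower bound $\mu_1(\G_\ell,6)\geq\pi/2$ is granted by Theorem \ref{THM 3}, so only a matching upper bound is needed. Let $e$ be the longest bridging edge, parametrized as $[0,\ell]$ with endpoints $\vv^\pm$, and let $\G_\ell^\pm$ denote the components of $\G_\ell\setminus e$ containing $\vv^\pm$. I would test $\lambda_2(\G_\ell)$ against
\[
\varphi(x)=\begin{cases}-1+c & \text{on } \G_\ell^-,\\ -\cos(\pi x/\ell)+c & \text{on } e,\\ 1+c & \text{on } \G_\ell^+,\end{cases}
\]
with $c$ chosen to enforce $\int_{\G_\ell}\varphi\,dx=0$. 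Under the natural assumption that the remaining edge lengths of $\G_\ell$ stay bounded while $\ell\to+\infty$, one finds $c=O(1/\ell)$, $\int|\varphi'|^2\,dx=\pi^2/(2\ell)$ and $\int\varphi^2\,dx=\ell/2+O(1)$, so that $\lambda_2(\G_\ell)\leq \pi^2/\ell^2+o(\ell^{-2})$. Combined with $|\G_\ell|=\ell+O(1)$ this yields $\mu_1(\G_\ell,6)\leq \pi/2+o(1)$ and closes the first limit.

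For $\ell\to 0$ the decisive topological observation is that, since $\ell$ is the \emph{longest} bridging edge, every bridge of $\G_\ell$ has length at most $\ell$ and therefore shrinks to zero. Let $\G_0$ be the graph obtained from $\G_\ell$ by contracting all its bridges to points; its edges are precisely those of the $2$-edge-connected components of $\G_\ell$, identified along the contracted vertices. Each such component is $2$-edge-connected and hence cycle-covered, and gluing cycle-covered subgraphs at single vertices preserves this property, so $\G_0$ admits a cycle covering. Theorem \ref{THM 3} then gives $\mu_1(\G_0,6)\geq\pi$. A spectral continuity argument -- lifting eigenfunctions of $\G_0$ to $\G_\ell$ by constant extension along the vanishing bridges for one inequality, and extracting a weak $H^1$-limit of normalized second eigenfunctions of $\G_\ell$ orthogonal to constants for the other -- together with the trivial $|\G_\ell|\to|\G_0|$, yields $\mu_1(\G_\ell,6)\to\mu_1(\G_0,6)\geq\pi$, which proves the $\liminf$ claim. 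The main obstacle I expect is precisely this spectral continuity across a topology change: as bridges collapse, pairs of Kirchhoff vertex conditions merge into single higher-degree ones on $\G_0$, and one has to verify carefully that quadratic form convergence is compatible with the mean-zero constraint used in the Rayleigh characterization of $\lambda_2$.
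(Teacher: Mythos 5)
Your proposal is correct and follows essentially the same route as the paper: the identity $\mu_1(\G,6)=|\G|\sqrt{\lambda_2(\G)}/2$ coming from \eqref{mu2}, a cosine test function supported on the long bridge for the upper bound as $\ell\to+\infty$ (the paper enforces the zero mean by stretching the cosine over an interval of length $\ell-a$ with $a=|\Gamma_1|-|\Gamma_2|$ rather than by adding a constant $c$, but this is immaterial), and, as $\ell\to0$, passage to the contracted, cycle-coverable graph $\G_0$ via a weak $H^1$-limit of normalized eigenfunctions. For the $\liminf$ only one direction of your spectral-continuity claim is actually needed, and the paper settles precisely the obstacle you flag through the elementary estimate $|\varphi_\ell(\ell)-\varphi_\ell(0)|\le\sqrt{\ell\,\lambda_2(\G_\ell)}\to0$, which makes the limit eigenfunction well defined, mean-zero and $L^2$-normalized on $\G_0$, so that \eqref{EQ-eigen geq circle} applies.
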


To understand the meaning of Theorem \ref{THM 4}, recall from \cite{dovetta} that for graphs with no terminal edges nor cycle coverings, ground states exist if and only if the mass is smaller than or equal to $\mu_\rr$. Hence, we can exploit the relation between the metric of $\G_\ell$ and the value of $\mu_1(\G_\ell,6)$ unravelled in the previous theorem in two directions. 

On the one hand, considering graphs with a  long enough bridging edge, we prove that there exist a whole interval of masses for which ground states are nonconstant, thus answering in the affirmative the question raised after Corollary \ref{COR 1}. 
 
\begin{cor}
\label{COR 2}
Let $p=6$ and write $\mu_1 = \mu_1(\G_\ell,6)$. If $\ell$ is large enough, then there exists a whole interval of masses $\mu\in(\mu_1,\mu_\rr]$ such that the ground states of mass $\mu$ are not constant on $\G_\ell$.
\end{cor}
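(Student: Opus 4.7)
The plan is to combine Theorem \ref{THM 4} with Theorem \ref{THM 1}(ii) and with the existence criterion for ground states on graphs without terminal edges nor cycle coverings. First, I would recall from \cite{dovetta} (and formula \eqref{massecrit}) that on such graphs $\G_\ell$ the critical mass equals $\mu_\rr=\pi\sqrt{3}/2$, and ground states of $E$ in $H^1_\mu(\G_\ell)$ exist if and only if $\mu\le\mu_\rr$. The key numerical observation is that $\mu_\rr=\pi\sqrt{3}/2>\pi/2$, so the interval $(\pi/2,\mu_\rr]$ is nonempty.

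Next I would invoke the first part of Theorem \ref{THM 4}, namely $\lim_{\ell\to+\infty}\mu_1(\G_\ell,6)=\pi/2$. Fix any $\mu^\star\in(\pi/2,\mu_\rr]$; by the limit above, there exists $\ell_0>0$ such that for all $\ell\ge\ell_0$ one has $\mu_1(\G_\ell,6)<\mu^\star\le\mu_\rr$. Hence for such $\ell$ the interval $(\mu_1(\G_\ell,6),\mu_\rr]$ is a nontrivial subinterval of $(\pi/2,\mu_\rr]$.

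Now fix $\ell\ge\ell_0$ and take any $\mu\in(\mu_1(\G_\ell,6),\mu_\rr]$. Since $\mu\le\mu_\rr$, by the characterization of \cite{dovetta} there exists a ground state $u_\mu\in H_\mu^1(\G_\ell)$ of $E$. On the other hand, since $\mu>\mu_1(\G_\ell,6)$, Theorem \ref{THM 1}(ii) ensures that the constant function $\cost$ is \emph{not} a local minimizer of $E$ in $H_\mu^1(\G_\ell)$. Because any ground state is a fortiori a local minimizer, $u_\mu$ cannot coincide with $\cost$; this yields the claim.

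I do not foresee any serious obstacle: the argument is a clean bookkeeping of thresholds, and all the substantial analytic content has already been placed in Theorems \ref{THM 1} and \ref{THM 4} and in the existence result of \cite{dovetta}. The only delicate point is checking the strict inequality $\mu_\rr>\pi/2$, which follows directly from the explicit value of $\mu_\rr$ recalled in \eqref{massecrit}; given this, the nonemptiness of the target interval of masses, and hence the corollary, is immediate.
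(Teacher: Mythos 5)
Your argument is correct and follows essentially the same route as the paper: invoke the existence threshold $\mu_\G=\mur$ for graphs with no terminal edge, use the limit $\mu_1(\G_\ell,6)\to\pi/2<\mur$ from Theorem \ref{THM 4} to make the interval $(\mu_1,\mu_\rr]$ nonempty for large $\ell$, and conclude via Theorem \ref{THM 1}(ii) that $\cost$, failing to be a local minimizer there, cannot be a ground state. No gaps.
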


On the other hand, focusing on sufficiently small bridging edges, we recover an interval of masses beyond $\mu_\rr$ (the largest value of masses allowing for a ground state) where $\cost$ provides a stable solution of the NLS equation.

\begin{cor}
\label{COR 3}
Let $p=6$ and write $\mu_1 = \mu_1(\G_\ell,6)$. If $\ell$ is small enough, then, for every $\mu\in(\mu_\rr,\mu_1)$, there exists a solution of \eqref{EQ-def mass constraint}-\eqref{stateq}  which is orbitally stable, i.e., $\cost$.
\end{cor}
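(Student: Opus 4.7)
The plan is to read Corollary~\ref{COR 3} as a direct bookkeeping consequence of Theorem~\ref{THM 4} and Theorem~\ref{THM 1}(i), together with the explicit value of the critical mass $\mur$ on the line recorded in \eqref{massecrit}. The only quantitative fact needed is the strict inequality $\mur<\pi$, which is immediate from the sharp Gagliardo--Nirenberg constant at $p=6$ (the standard value being $\mur=\pi\sqrt{3}/2$). Everything else is structural.

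The key step is the following. By the second assertion in Theorem~\ref{THM 4},
\[
\liminf_{\ell\to 0^+}\mu_1(\G_\ell,6)\geq \pi,
\]
so fixing any $\eps\in(0,\pi-\mur)$ we obtain $\ell_0>0$ such that $\mu_1(\G_\ell,6)\geq \pi-\eps>\mur$ for every $\ell\in(0,\ell_0)$. In particular, for such $\ell$ the interval $(\mur,\mu_1(\G_\ell,6))$ is nonempty.

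Pick any $\mu$ in this interval. The constant $\cost=\sqrt{\mu/\ell}$ belongs to $\HmuG$ and solves \eqref{EQ-def mass constraint}-\eqref{stateq} for the Lagrange multiplier $\lambda=\cost^{p-2}=\cost^4$, exactly as recorded in the discussion preceding Theorem~\ref{THM 1}. Since $\mu<\mu_1(\G_\ell,6)$, Theorem~\ref{THM 1}(i) applies verbatim: $\cost$ is a local minimizer of $E$ in $\HmuG$ and is orbitally stable. This is the sought-after solution.

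No real obstacle is present; the statement amounts to a compatibility between two thresholds already established. The only mild points worth being explicit about are (a) checking that $\mur<\pi$ so that the interval $(\mur,\mu_1)$ is nontrivial, and (b) observing that the stability statement of Theorem~\ref{THM 1}(i) is given without any restriction on the mass regime other than $\mu<\mu_1(\G,p)$, so it remains meaningful even past $\mur$, where ground states have disappeared and $E$ is unbounded below on $\HmuG$. This last observation is exactly what makes the corollary interesting: stability of $\cost$ coexists with nonexistence of a global minimizer.
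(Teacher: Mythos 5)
Your argument is correct and follows exactly the paper's own route: use the $\liminf$ estimate of Theorem \ref{THM 4} together with $\mur=\pi\sqrt{3}/2<\pi$ to guarantee $\mu_1(\G_\ell,6)>\mur$ for small $\ell$, and then invoke Theorem \ref{THM 1}(i) for any $\mu\in(\mur,\mu_1)$. Nothing is missing; the explicit $\eps$-extraction from the $\liminf$ and the remark that stability persists where ground states no longer exist are both accurate.
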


The paper is organized as follows. Section \ref{sec:prelim} states the framework we work within and collects some known results. In Section \ref{sec:local min} we begin the analysis of the variational properties of constant functions on graphs, proving Theorem \ref{THM 1}, whereas Section \ref{sec:small mass} provides the proof of Theorem \ref{THM 2}. Finally, in Section \ref{sec:critical regime} we deal in detail with the critical regime,  proving theorems \ref{THM 3}, \ref{THM 4} and their corollaries.

\section{Preliminaries}
\label{sec:prelim}

This section  overviews some preliminaries that serve as foundations for the analysis we develop in the forthcoming sections.

\subsection{Ground states on compact metric graphs}

We begin by recalling some known results concerning ground states of the NLS energy functional \eqref{EQ-def energy INTRO} with prescribed mass $\mu$ on general compact metric graphs.
The issue of the existence of these global minimizers has been dealt with in \cite{dovetta}.

We let
\[
\mathcal{E}_\G(\mu):=\inf_{u\in\HmuG}E(u)
\]
be the \textit{ground state energy level}, so that a ground state of $E$ of mass $\mu$ is a function $u \in H^1_\mu(\G)$ satisfying
\begin{equation}
\label{min}
E(u) = 	\mathcal{E}_\G(\mu)\,.
\end{equation}

On the one hand, in the subcritical case $p\in(2,6)$, ground states exist for every value of the mass $\mu$, regardless of the graph structure (\cite{dovetta}, Theorem 1.1).

On the other hand, in the critical case $p=6$  the topology of $\G$ becomes relevant. Indeed, Theorem 1.2 in \cite{dovetta} shows that  threshold phenomena occur, since for a critical value of the mass, $\mu_\G$, one has
\begin{itemize}
\item
$\mathcal{E}_\G(\mu)\begin{cases}
>-\infty & \text{if }\mu\leq\mu_\G\\
=-\infty & \text{if }\mu>\mu_\G,
\end{cases}
$
\item ground states with mass $\mu$ exist if and only if $\mu\leq\mu_\G$.
\end{itemize} 
Furthermore, recalling from  \cite{AST-cmp17}  that the critical masses of the half-line and of the line are explicitly given by 
\begin{equation}
\label{massecrit}
\mup = \frac{\pi \sqrt{3}}{4} \qquad\hbox{ and }\qquad \mur = \frac{\pi \sqrt{3}}{2},
\end{equation}
there results
\begin{itemize}
\item if $\G$ has at least one terminal edge (Figure \ref{terminal}), then $\displaystyle\mug=\mup$
\item if $\G$  has no terminal edge, then $\displaystyle\mug=\mur$.
\end{itemize}

Among the main tools that play a fundamental role in all the proofs of existence of ground states are
the Gagliardo-Nirenberg inequalities on compact graphs, that have been established in \cite{dovetta}. The first one reads
\begin{equation}
\label{EQ-GN subcritical}
\uLp^p\leq K_p \mu^{\frac{p+2}4} \|u\|_{H^1(\G)}^{\frac{p-2}{2}}
\end{equation}
and holds for every $u\in H_\mu^1(\G)$ and  every $p\geq 2$, with $K_p>0$ depending only on $\G$ and $p$.

When $p=6$, the preceding inequality is not powerful enough, and it has to be replaced by two modified versions,  derived in Proposition 4.1 and 4.2 of \cite{dovetta} (see also \cite{AST-cmp17}). If $\G$ is a graph with at least one terminal edge, then for every $\mu\leq\mu_{\rr^+}$ there exist $C>0$, depending only
on $\G$, such that for every $u\in\HmuG$,
\begin{equation}
\label{EQ-modified GN tip}
\uLsix^6\leq 3\Big(\frac{\mu-\theta}{\mu_{\rr^+}}\Big)^2\udot^2+C\theta^{\frac{1}{2}},
\end{equation}
with  $\theta\in[0,\mu]$ depending also on $u$.

If, on the contrary, $\G$ has no terminal edge, the previous inequality reads

\begin{equation}
\label{EQ-modified GN no tip}
\uLsix^6\leq 3\Big(\frac{\mu-\theta}{\mu_{\rr}}\Big)^2\udot^2+C\theta^{\frac{1}{2}}.
\end{equation}

\subsection{A bifurcation result}

We recall here a classical result from bifurcation theory that we are going to apply in Section \ref{sec:small mass}. It is taken from Chapter 5 in \cite{ambrosettiprodi}.

Let $X,Y$ be (real) Banach spaces and let $F\in C^2(\rr\times X,Y)$. Define, for $(\lambda, u) \in \rr\times X$,
\[
L(\lambda,u):= \frac{\partial F}{\partial u}(\lambda,u)
\]
and
\[
M(\lambda,u):= \frac{\partial^2 F}{\partial\lambda\partial u}(\lambda,u).
\]

The following theorem states a set of sufficient conditions on $F$ for the existence of a bifurcation point  and gives a local characterization of the set of zeros of $F$. 

\begin{thm}[\cite{ambrosettiprodi}, Theorem 4.1]
\label{bifurcation theorem}
Let $F\in C^2(\rr\times X,Y)$ satisfy 
\begin{equation*}
F(\lambda,0)=0\qquad\forall\lambda\in\rr\,.
\end{equation*}
Assume that there exist $\bar{\lambda}\in\rr$ and  $ v\in X\setminus\{0\}$ such that
\begin{itemize}
\item[(A1)]  $\quad\ker L(\bar{\lambda},0)=\{tv\,:\,t\in\rr\}$;
\item[(A2)] $\quad \rm{im }\, L(\bar{\lambda},0)$ is closed and has codimension 1;
\item[(A3)] $\quad M(\bar{\lambda},0)v\notin\rm{im}\, L(\bar{\lambda},0)$.
\end{itemize}
Then $(\bar{\lambda},0)$ is a bifurcation point for $F$. Precisely, there exists a neighborhood $\NN$ of $(\bar{\lambda},0)$ in $\rr\times X$ so that the set of nontrivial zeros of $F$ in $\NN$ is a unique $C^1$ curve through $(\bar{\lambda},0)$.
\end{thm}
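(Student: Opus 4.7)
The plan is to perform a Lyapunov--Schmidt reduction, transforming the infinite-dimensional equation $F(\lambda,u)=0$ into a scalar one and closing the argument with the implicit function theorem. Since (A1) makes the kernel one-dimensional, one can split $X=\mathrm{span}\{v\}\oplus X_0$ with a continuous projection onto the kernel (possible because the kernel is finite-dimensional), and (A2) gives a decomposition $Y=\mathrm{im}\,L(\bar\lambda,0)\oplus Y_0$ with $Y_0$ one-dimensional and continuous projection $Q$ onto $Y_0$. Writing $u=tv+w$ with $t\in\rr$ and $w\in X_0$, the equation $F(\lambda,u)=0$ becomes the system
\[
(I-Q)F(\lambda,tv+w)=0,\qquad QF(\lambda,tv+w)=0.
\]

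The first (``range'') equation can be solved for $w$ via the implicit function theorem: its derivative with respect to $w$ at $(\bar\lambda,0,0)$ is $(I-Q)L(\bar\lambda,0)|_{X_0}$, which by (A1)--(A2) is an isomorphism onto $\mathrm{im}\,L(\bar\lambda,0)$. Since $F$ is $C^2$, this yields a $C^2$ map $w=w(\lambda,t)$ defined near $(\bar\lambda,0)$, and the identity $F(\lambda,0)\equiv 0$ forces $w(\lambda,0)\equiv 0$.

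Inserting $w(\lambda,t)$ into the second equation yields a scalar $C^2$ function $\Phi(\lambda,t):=QF(\lambda,tv+w(\lambda,t))\in Y_0\simeq\rr$ with $\Phi(\lambda,0)\equiv 0$. Nontrivial solutions of $F=0$ near $(\bar\lambda,0)$ therefore correspond to zeros of
\[
\Psi(\lambda,t):=\int_0^1\Phi_t(\lambda,st)\,ds,
\]
which satisfies $\Phi=t\Psi$ and belongs to $C^1$. A direct computation, using $w(\lambda,0)\equiv0$ and that any element of $\mathrm{im}\,L(\bar\lambda,0)$ is annihilated by $Q$, gives $\Psi(\bar\lambda,0)=QL(\bar\lambda,0)v=0$, while a further differentiation combined with the transversality hypothesis (A3) yields $\Psi_\lambda(\bar\lambda,0)=QM(\bar\lambda,0)v\neq 0$. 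A final application of the implicit function theorem then produces a unique $C^1$ curve $\lambda=\lambda(t)$ through $t=0$ on which $\Psi$ vanishes, giving the desired $C^1$ parametrization of the nontrivial zeros of $F$ in a neighborhood of $(\bar\lambda,0)$.

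The step I expect to be most delicate is the factorization $\Phi=t\Psi$ with $\Psi\in C^1$: this is precisely what forces the hypothesis $F\in C^2$ rather than merely $C^1$, since $\Psi$ is an averaged derivative of $\Phi$ and inherits one fewer degree of smoothness. Without this regularity the closing invocation of the implicit function theorem would not yield a $C^1$ bifurcation branch. Everything else is bookkeeping around two applications of the implicit function theorem, with (A3) playing the role of the nondegeneracy condition that converts transversality into the smoothness of the bifurcating curve.
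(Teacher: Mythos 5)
Your Lyapunov--Schmidt argument is correct and is the standard proof of this Crandall--Rabinowitz-type theorem; the paper itself gives no proof, quoting the result directly from Ambrosetti--Prodi, where essentially the same reduction (splitting off the one-dimensional kernel and cokernel, solving the range equation by the implicit function theorem, factoring the bifurcation function as $\Phi=t\Psi$, and using (A3) to get $\Psi_\lambda(\bar\lambda,0)\neq 0$) is carried out. Your identification of the $C^2$ hypothesis as what makes $\Psi$ of class $C^1$ is exactly the right delicate point, and the only cosmetic gap is that $\Psi(\bar\lambda,0)$ is really $QL(\bar\lambda,0)\bigl(v+w_t(\bar\lambda,0)\bigr)$, which vanishes for the reason you give ($Q$ annihilates the range of $L(\bar\lambda,0)$).
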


Finally, we denote by  $\lambda_2(\G) $ the smallest positive eigenvalue of the Kirchhoff Laplacian on $\G$ (i.e. $-(\,\cdot\,)''$ on $\G$, coupled with Kirchhoff conditions) namely
\begin{equation}
\label{la2}
\lambda_2(\G) = \inf_{\varphi \in H^1(\G)\setminus\{0\}\atop \int_\G \varphi \dx =0} \frac{ \int_\G |\varphi' |^2\dx}{\int_\G |\varphi |^2\dx}.
\end{equation}
For future reference we recall from Theorem 1 in \cite{friedlander} (see also \cite{nicaise} and \cite{KN14}), that  for every $\G$ such that $|\G|=\ell$,
\begin{equation}
\label{estla2}
\lambda_2(\G)\geq\frac{\pi^2}{\ell^2}.
\end{equation}

\subsection{Well-posedness}

Before discussing  the orbital stability we  address the problem of  the well-posedness of equation \eqref{nlse} for initial data $u_0\in H^1(\G)$. As for the usual NLS equation on the real line we need to distinguish the subcritical and critical regime. In the subcritical regime global well-posedness holds true for every initial datum in $H^1(\G)$. In the critical regime, instead, it  holds   for initial data with mass $\mu$ smaller than $\mu_{\mathbb R^+}$ ($\mu_{\mathbb R}$) if $\G$ has at least one terminal edge (has no terminal edge). 

Both in the subcritical and critical case the strategy to prove global well-posedness is the usual one: prove local existence of solutions in $H^1(\mathcal G)$ for $t$ in an interval $(0,T^*)$; show that the conservation laws 
\[
\|u(t)\|_2 = \|u_0\|_2  \qquad \textrm{(mass) }
\]
and 
\[
E(u(t)) = E(u_0) \qquad\qquad\qquad\textrm{(energy)}
\]
are satisfied by the solution $u(t)$ and for $t\in(0,T^*)$; use energy and mass  conservation to show that the $H^1$-norm of $u(t)$ must stay bounded for all times, implying that $T^*=+\infty$.

For the proof of the local existence of solutions   and of the conservation laws we refer to \cite{CFN-non17}, as the argument used there for  non-compact graphs adapts without changes  to compact graphs. 

We focus our attention on the third  step, proving a uniform bound for $\|u(t)\|_{H^1}$. 

We first consider  the critical case $p=6$ for a graph having at least one terminal edge. Assume that  the initial datum has mass $\|u_0\|_2^2 = \mu$.  By the conservation of the energy and by the modified Gagliardo-Nirenberg inequality \eqref{EQ-modified GN tip}  we infer, for all $t \in (0,T^*)$,
\[
\begin{aligned}
E(u_0)  = E(u(t))  = & \frac{1}{2}\|u'(t)\|_2^2-\frac{1}{6}\|u(t)\|_6^6 \\ 
\geq &  \frac{1}{2}\|u'(t)\|_2^2 -  \frac12 \Big(\frac{\mu-\theta}{\mu_{\rr^+}}\Big)^2\|u'(t)\|_2^2 - C \theta^{\frac{1}{2}} \\ 
\geq & \frac{1}{2} \left( 1 -  \frac{\mu^2}{\mu_{\rr^+}^2}\right)\|u'(t)\|_2^2 - C \mu^{\frac{1}{2}}.
\end{aligned}
\]
Hence, if $\mu<\mu_{\rr^+}$, one has $\|u'(t)\|_2 \leq C$ and, by mass conservation,  $\|u(t)\|_{H^1}\leq C$. Global well-posedness then follows by a standard  Gronwall-type argument, see \cite{CFN-non17} for more details. 

If $\G$ has no terminal edge, using the same argument together with inequality \eqref{EQ-modified GN no tip} one obtains global well-posedness for $\mu<\mu_{\rr}$. 

In the subcritical case, $p\in (2,6)$,  by using the standard Gagliardo-Nirenberg inequality \eqref{EQ-GN subcritical} and mass and energy conservation, it follows that  
\[
E(u_0)  = E(u(t))  \geq  \frac{1}{2} \|u'(t)\|_2^2 - \frac{K_p}{p}  \mu^{\frac{p}4+\frac12} \|u'(t)\|_2^{\frac{p}2-1}\,,
\]
which implies that $\|u'(t)\|_2\leq C$ for all $\mu$, allowing one to conclude as above.

We remark that, as far as orbital stability is concerned, only local existence is needed, see Assumption 1 in \cite{GSS}. The discussion on the global well-posedness  has to be  understood as an additional result which may have an interest on its own. 

\subsection{Orbital stability}
\label{subsec: stability}

We note that  for every $\lambda>0$ the function $\widetilde\kappa = \widetilde \kappa(\lambda)=\lambda^{\frac{1}{p-2}}$ is a solution of the stationary equation \eqref{stateq}. For $\lambda = (\mu/\ell)^{p/2-1}$ the function  $\widetilde \kappa(\lambda)$ coincides with the constant solution $\cost$ of mass $\mu$ given in   \eqref{cost}. 

To prove orbital stability of  the stationary solution $\kappa_\mu e^{i\lambda t}$ we follow the general approach in \cite{GSS}. Define the functional $Q(u) := \frac12\|u\|_2^2$, and consider the operator  
\[H_\lambda  := E''(\widetilde \kappa) + \lambda Q''(\widetilde \kappa). \]
It turns out that $H_\lambda$, to be understood as an operator in $L^2(\G)\oplus L^2(\G)$, is given by 
\[
H_\lambda = \begin{pmatrix} L_1 & 0 \\ 0 & L_0  \end{pmatrix}
\]
where $L_0$ and $L_1$ are the self-adjoint operators 
\[
L_0 u = - u'' \qquad u \in  H_K^2(\G) 
\]
\[
L_1 u = - u'' - (p-2)\lambda u \qquad u \in  H_K^2(\G) .
\]
To study the orbital stability of $\widetilde \kappa$ one is interested in the spectral properties of $H_\lambda$. We note that $L_0$ has zero as a simple eigenvalue and the remaining part of the spectrum is positive.  On the other hand $L_1$ has at least one negative eigenvalue, $- (p-2)\lambda$, the corresponding eigenfunction being  the constant function. 

Assume now that  $\widetilde \kappa$ is a local minimizer of the energy functional constrained to $H^1_\mu(\G)$ for some value of $\mu$; then it is also a local minimizer for  $E(u) + \lambda Q(u)$ under the same mass constraint. Hence, by \cite[Prop. B1]{FGJS-cmp04},  $H_\lambda$ has at most one negative eigenvalue.  By the  discussion above we conclude that  $H_\lambda$ has exactly one negative eigenvalue.  Hence, by Theorem 3 in \cite{GSS} a local minimizer of the energy functional constrained to $H^1_\mu(\G)$  is orbitally stable. 

We will use this property in Section \ref{sec:local min} to prove that for $\mu$ smaller than the threshold value $\mu_1$,  $\cost$  is orbitally stable.

\section{Local minimality and stability: proof of Theorem \ref{THM 1}}

\label{sec:local min}

This section is devoted to the proof of Theorem \ref{THM 1} which will be carried out by evaluating the sign of the second derivative of the energy at $\cost$.

Therefore we begin by computing the second derivative of $E$ at $u = \cost$, considered as a functional $\overline E$ on the manifold $H^1_\mu(\G)$, namely the $L^2$ sphere of radius $\sqrt\mu$ in $H^1(\G)$. Note that the tangent space to $H^1_\mu(\G)$ at $\cost$ is 
\[
T_{\cost} H^1_\mu(\G) = \{\varphi \in H^1(\G) \; :\; \int_\G{\rm Re} (\varphi) \dx = 0 \}.
\]

\begin{proposition}
\label{seconder}
For every $p\in (2,6]$ there results 
\begin{equation}
\label{secder}
\overline E''(\cost)\varphi^2 =  \int_\G|\varphi'|^2 \dx - (p-2)\cost^{p-2} \int_\G \big| {\rm Re}(\varphi)\big|^2 \dx,
\end{equation}
for every $\varphi \in T_{\cost} H^1_\mu(\G)$.
\end{proposition}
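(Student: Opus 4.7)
The plan is to use the Lagrangian reformulation in order to convert the constrained second derivative into an unconstrained one. Plugging $\cost$ into \eqref{stateq} immediately yields the Lagrange multiplier $\lambda=\cost^{p-2}$, so that the functional $\mathcal L(u):=E(u)+\lambda Q(u)$, with $Q(u):=\tfrac12\|u\|_2^2$, satisfies $\mathcal L'(\cost)=0$ as a map on the whole space $H^1(\G)$. Since $\mathcal L$ agrees with $E$ up to the additive constant $\lambda\mu/2$ on the manifold $\HmuG$, this gives
\[
\overline{E}''(\cost)(\varphi,\varphi)=\mathcal L''(\cost)(\varphi,\varphi)=E''(\cost)(\varphi,\varphi)+\lambda\, Q''(\cost)(\varphi,\varphi)
\]
for every tangent vector $\varphi\in T_{\cost}\HmuG$, and the problem reduces to computing two ambient Hessians.

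I would then split $\varphi=\eta+i\xi$ with real $\eta,\xi\in H^1(\G;\R)$, noting that membership in the tangent space forces $\int_\G \eta\dx=0$ (a fact we will not actually need for the pointwise identity itself). Since $Q$ is purely quadratic, $Q''(\cost)(\varphi,\varphi)=\int_\G(\eta^2+\xi^2)\dx$. For $E$, the kinetic term $\tfrac12\|u'\|_2^2$ immediately contributes $\int_\G|\varphi'|^2\dx$. The nonlinear term is the one requiring care, because $|u|^p=(\phi^2+\psi^2)^{p/2}$ is not holomorphic in $u=\phi+i\psi$. Taylor-expanding the scalar map $(\phi,\psi)\mapsto(\phi^2+\psi^2)^{p/2}$ around $(\cost,0)$ produces pure second derivatives $p(p-1)\cost^{p-2}$ in the $\phi$ direction and $p\cost^{p-2}$ in the $\psi$ direction, with vanishing mixed derivative. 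Hence the contribution of $-\tfrac{1}{p}\|u\|_p^p$ to $E''(\cost)(\varphi,\varphi)$ is $-(p-1)\cost^{p-2}\int_\G\eta^2\dx-\cost^{p-2}\int_\G\xi^2\dx$.

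Summing all the pieces, the $\xi^2$ contributions from the nonlinearity and from $\lambda Q''$ cancel exactly, while the $\eta^2$ coefficients combine into $\lambda-(p-1)\cost^{p-2}=-(p-2)\cost^{p-2}$, yielding precisely the identity \eqref{secder}. I do not anticipate a substantial obstacle: the single delicate point is the non-holomorphicity of $|u|^p$, which forces the asymmetric treatment of real and imaginary perturbations and, in particular, explains why the final formula only sees $|\mathrm{Re}(\varphi)|^2$ rather than $|\varphi|^2$.
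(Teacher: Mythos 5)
Your argument is correct and reaches \eqref{secder}, but it handles the mass constraint by a genuinely different (though equivalent) device. The paper parametrizes the sphere explicitly through the curve $\gamma(t)=\sqrt{\mu}\,(\cost+t\varphi)/\|\cost+t\varphi\|_2$ and computes $\overline E''(\cost)\varphi^2=E''(\cost)\dot\gamma(0)^2+E'(\cost)\ddot\gamma(0)$, so that the curvature of $\HmuG$ enters through the explicit correction $E'(\cost)\ddot\gamma(0)=\cost^{p-2}\int_\G|\varphi|^2\dx$. You instead pass to the Lagrangian $\mathcal{L}=E+\lambda Q$ with $\lambda=\cost^{p-2}$ read off from \eqref{stateq}, observe that $\mathcal{L}'(\cost)=0$ in the ambient space so that the second--fundamental--form term drops out, and conclude $\overline E''(\cost)\varphi^2=E''(\cost)\varphi^2+\lambda Q''(\cost)\varphi^2$. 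The two corrections are literally the same number: differentiating the identity $Q(\gamma(t))\equiv\mu/2$ twice gives $Q''(\cost)\varphi^2=-Q'(\cost)\ddot\gamma(0)$, and $E'(\cost)=-\lambda Q'(\cost)$. The genuinely computational core --- the ambient Hessian of $E$ with its asymmetric coefficients $(p-1)\cost^{p-2}$ on ${\rm Re}(\varphi)$ and $\cost^{p-2}$ on ${\rm Im}(\varphi)$, obtained from the Taylor expansion of $(\phi^2+\psi^2)^{p/2}$ at $(\cost,0)$ --- is identical in both arguments, and your coefficients check out. What your route buys is that it exhibits $\overline E''(\cost)$ directly as the quadratic form of the operator $H_\lambda=E''(\widetilde\kappa)+\lambda Q''(\widetilde\kappa)$ that the paper introduces separately in Section \ref{subsec: stability} for the stability analysis, making that link transparent; what the paper's route buys is a self-contained calculation that never needs to identify the multiplier in advance.
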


\begin{proof} We take $\varphi \in T_{\cost} H^1_\mu(\G)$ and we define a smooth curve $\gamma : (-1,1) \to H^1_\mu(\G)$ as
\[
\gamma(t) = \frac{\sqrt{\mu}}{\|\cost+t\varphi\|_2}(\cost + t\varphi).
\]
Note that $\gamma (0) = \cost$.

To simplify notation, let $g(t):=\sqrt{\mu}/\|\cost+t\varphi\|_{2}$, so that $\gamma(t) = g(t)(\cost + t\varphi)$. With elementary computations keeping into account that $ {\rm Re}(\varphi)$ has zero mean value,
\begin{align*}
g(0) & =1\\
\dot g(0) & =-\frac{1}{\mu}\int_\G {\rm Re }(\cost\varphi)\,dx=0\\
\ddot g(0) & =\frac{3}{\mu^2}\Big(\int_\G {\rm Re }(\cost\varphi)\,dx\Big)^2-\frac{1}{\mu}\int_\G|\varphi |^2\,dx = -\frac{1}{\mu}\int_\G|\varphi |^2\,dx.
\end{align*}
Therefore
\begin{align*}
\label{gamma}
\dot \gamma(0) & = \dot g(0) \cost + g(0) \varphi =  \varphi \\
\ddot \gamma(0) & = \ddot g(0) \cost + 2\dot g(0) \varphi = -\left(\frac{1}{\mu} \int_\G|\varphi|^2 \dx\right) \cost.
\end{align*}
Now
\[
\overline E''(\cost)\varphi^2 = \frac{d^2}{dt^2} E(\gamma(t))\Big|_{t=0} =  E''(\cost)\dot\gamma(0)^2 + E'(\cost) \ddot \gamma(0).	
\]
Since for every $v\in H^1(\G)$,
\[
E'(\cost)v = -\cost^{p-1}\int_\G   {\rm Re}(v) \dx 
\] 
and 
\[
E''(\cost)v^2 = \int_\G |v'|^2\dx  - (p-1)\cost^{p-2} \int_\G \big|{\rm Re}(v)\big|^2\dx   - \cost^{p-2} \int_\G \big|{\rm Im}(v)\big|^2\dx,
\]
by virtue of the expressions of $\dot\gamma(0)$ and $\ddot\gamma(0)$ we conclude that
\begin{align*}
\overline E''(\cost)\varphi^2 & = \int_\G |\varphi' |^2\dx - (p-1)\cost^{p-2}\int_\G \big| {\rm Re}(\varphi)\big|^2\dx- \cost^{p-2}\int_\G \big| {\rm Im}(\varphi)\big|^2\dx + \cost^p \frac\ell\mu\int_\G |\varphi|^2 \dx \nonumber \\ 
& =  \int_\G |\varphi' |^2\dx - (p-2)\cost^{p-2}\int_\G \big| {\rm Re}(\varphi)\big|^2\dx,
\end{align*}
having also used $\cost = \sqrt{\mu/\ell}$. This concludes the proof. 
\end{proof}

\noindent{\em Proof of Theorem \ref{THM 1}}.
Recalling from \eqref{la2} the  definition of $\lambda_2(\G)$ and using it in \eqref{secder}  we obtain
\[
\begin{aligned}
\overline{E}''(\cost)\varphi^2\geq & \lambda_2(\G)\int_\G|{\rm Re}(\varphi)|^2\,dx-(p-2)\cost^{p-2}\int_\G|{\rm Re}(\varphi)|^2\,dx + \int_\G |{\rm Im}(\varphi') |^2\dx \\ 
= & \Big(\lambda_2(\G)-(p-2)\cost^{p-2}\Big)\int_\G|{\rm Re}(\varphi)|^2\,dx +  \int_\G |{\rm Im}(\varphi') |^2\dx
\end{aligned}
\] 
for every $\varphi \in T_{\cost} H^1_\mu(\G)$.
Then, if  
\[
\cost^{p-2} < \frac{\lambda_2(\G)}{p-2},
\]
i.e, if
\begin{equation}
\label{mu2}
\mu < \ell\left(  \frac{\lambda_2(\G)}{p-2}\right)^{\frac2{p-2}} =: \mu_1,
\end{equation}
we see that $\overline E''(\cost)\varphi^2 >0$ for every $\varphi \in T_{\cost} H^1_\mu(\G)$, unless $\varphi = i c$ for some constant $c$; in the latter case one has $\overline E''(\cost)\varphi^2 = 0$.  However, by the mass constraint, every variation of $\cost$ which keeps the function constant must reduce to a  phase multiplication which does not affect the energy, and therefore $\cost$ is a local minimum for $E$ on $H^1_\mu(\G)$.

On the other hand, let $\varphi_2$ be an eigenfunction corresponding to $\lambda_2(\G)$, so that
\[
\int_\G |\varphi_2' |^2\dx = \lambda_2(\G) \int_\G |\varphi_2 |^2\dx.
\]
Assume, without loss of generality,  that $\varphi_2$ is real valued. Testing $\overline E''(\cost)$ with $\varphi_2$ we obtain
\begin{equation*}
\overline E''(\cost)\varphi_2^2 =  \Big(\lambda_2(\G) -(p-2) \cost^{p-2}\Big )\int_\G |\varphi_2 |^2\dx
\end{equation*}
which shows, via \eqref{mu2}, that $\overline{E}''(\cost)\varphi_2^2<0$ whenever $\mu>\mu_1$. In this case, $\cost$ is no longer a local minimum of the energy in $\HmuG$.

The stability properties of $\cost$  follow then directly from the previous part of the proof and the discussion in Section \ref{subsec: stability}.
\hspace{\stretch{1}} $\Box$ 

\begin{rem}
\label{REM-mu1 in THM1}
The previous argument does not allow us to conclude at the threshold value $\mu=\mu_1$. Indeed in this case $\overline{E}''(\cost)$ is only positive semidefinite  on $T_{\cost}\HmuG$, which is not enough to decide whether $\cost$ locally minimizes the energy or not.
\end{rem}

\section{Global minimality: proof of Theorem \ref{THM 2}}
\label{sec:small mass}

The aim of this section is to deal with ground states, proving that, both in the subcritical and in the critical case, the constant function $\cost$ is a global minimizer of the energy in $H_\mu^1(\G)$, provided the mass $\mu$  is sufficiently small. To establish this result, as stated in Theorem \ref{THM 2}, we apply Theorem \ref{bifurcation theorem}. Thus we define the spaces of real-valued functions
\[
X = \{ u \in H^2_K(\G) \; : \;  {\rm Im } (u) = 0\} ,\qquad Y = \{ u \in L^2(\G) \; : \;  {\rm Im } (u) = 0\}
\]
and a map $F:\rr\times X\to Y$ as

\begin{equation}
\label{EQ- def bifurcation F}
F(\lambda,u):=u''+|u|^{p-2}u-\lambda u.
\end{equation}
Note that 
\[
F(\lambda,0)=0
\]
for every $\lambda\in\rr$; these are the trivial zeros of $F$. We begin with the following preliminary result.

\begin{proposition}
\label{LEMMA- bifurcation}
There exists a neighborhood $\NN$ of $(0,0)$ in $\rr\times X$ such that the set of nontrivial   zeros of $F$ in $\NN$ is a unique $C^1$ curve through $(0,0)$. Moreover,  these zeros are of the form $(\lambda,\pm\lambda^{\frac1{p-2}})$, for sufficiently small $\lambda>0$. 
\end{proposition}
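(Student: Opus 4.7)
The plan is to apply the bifurcation theorem (Theorem \ref{bifurcation theorem}) to the map $F$ defined in \eqref{EQ- def bifurcation F} at the point $(\bar\lambda, 0) = (0,0)$. Since $p>2$, the nonlinearity $u \mapsto |u|^{p-2}u$ vanishes together with its first derivative at $u=0$, so the linearizations take the form
\[
L(\lambda,0)v = v'' - \lambda v, \qquad M(\lambda,0)v = -v,
\]
as operators from $X$ to $Y$. The first step is therefore to verify hypotheses (A1)--(A3) at $(0,0)$ with the obvious candidate $v = \mathbf{1}$, the constant function equal to one.

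For (A1): the kernel of $L(0,0)$ consists of $v\in H^2_K(\G)$ with $v''\equiv 0$, hence of functions that are affine on each edge. Continuity at the vertices plus the Kirchhoff condition together with connectedness of $\G$ force such a $v$ to be constant, so $\ker L(0,0) = \rr\cdot\mathbf{1}$. For (A2): integration by parts, combined with the Kirchhoff condition, gives $\int_\G v''\dx = 0$ for every $v\in H^2_K(\G)$, so the image of $L(0,0)$ is contained in $Y_0 := \{f\in Y\,:\,\int_\G f\dx = 0\}$; the reverse inclusion is a standard Fredholm solvability statement for the Neumann--Kirchhoff problem $v''=f$ with zero mean, which one can either quote from self-adjointness of the Kirchhoff Laplacian or check by hand by constructing edgewise primitives and adjusting constants using the vertex conditions. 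Thus $\mathrm{im}\,L(0,0)=Y_0$, closed and of codimension one. For (A3): $M(0,0)\mathbf{1} = -\mathbf{1}$, which has mean $-\ell \neq 0$ and so does not lie in $Y_0$. Theorem \ref{bifurcation theorem} then yields a neighborhood $\NN$ of $(0,0)$ in $\rr\times X$ where the nontrivial zeros of $F$ form a unique $C^1$ curve through $(0,0)$.

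The second step is to identify this abstract curve with the explicit one stated in the proposition. For every $\lambda>0$ the constant functions $\pm\lambda^{1/(p-2)}$ belong to $X$ and solve $F(\lambda,u)=0$, since $u''\equiv 0$ and $|u|^{p-2}u = \lambda u$ for these choices. This two-sided branch of constants furnishes nontrivial zeros of $F$ arbitrarily close to $(0,0)$ in $\rr\times X$, and by the uniqueness part of the bifurcation theorem must exhaust the set of nontrivial zeros of $F$ inside a (possibly smaller) neighborhood $\NN$.

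The main obstacle in this scheme is the verification of (A2), namely the surjectivity of $L(0,0)$ onto the zero-mean subspace $Y_0$; the other two conditions reduce to essentially algebraic observations about the Kirchhoff Laplacian and the operator $M$. A secondary delicate point is the regularity of the power nonlinearity at $u=0$ when $p$ is close to $2$, but since only the linearization at $u=0$ enters the three hypotheses, and this linearization is trivial, the argument is unaffected by the mild regularity of $|u|^{p-2}u$.
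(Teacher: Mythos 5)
Your proposal is correct and follows essentially the same route as the paper: apply Theorem \ref{bifurcation theorem} at $(0,0)$, verify (A1)--(A3) with the constant function spanning the kernel, and then identify the abstract curve with the explicit branch of constants $(\lambda,\pm\lambda^{1/(p-2)})$ by uniqueness. The only cosmetic differences are in the verifications: the paper gets (A1) by integrating $v''\bar v$ by parts rather than by your edgewise-affine argument, and establishes the surjectivity in (A2) by a direct minimization of $\int_\G \frac12|v'|^2+fv\,dx$ rather than by quoting Fredholm/self-adjointness.
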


\begin{proof} We prove the statement checking that $F$ satisfies the hypotheses of Theorem \ref{bifurcation theorem} at
$(0,0)$. Clearly $F$ is $C^2$ and $F(0,0) = 0$. Moreover, differentiating \eqref{EQ- def bifurcation F} with respect to $u$, we have
\begin{equation}
\label{EQ-def F' u}
L(\lambda,u)v:=  \frac{\partial F}{\partial u}(\lambda,u)v=v''+(p-1)|u|^{p-2}v-\lambda v \qquad\forall v\in X
\end{equation}
and, evaluating at $(0,0)$,
\[
L(0,0)v=v''.
\]
Therefore $v\in \ker L(0,0)$ if and only if $v'' =0$ in $\G$. Now multiplying by $  v$ and integrating on $\G$ yields, by self-adjointness, 
\[
0  = \int_\G   v'' v \dx =   - \int_\G |v'|^2\dx.
\]
This, and the continuity of $v$, show that
\begin{equation*}
\ker L(0,0)=\{v\in X\,:\, v\text{ is constant on }\G\}
\end{equation*}
and assumption $(A1)$ is fulfilled.
	
We now prove that
\begin{equation*}
{\rm im}\,L(0,0)=\Big\{f\in Y\,:\,\int_{\G}f\,dx=0\Big\},
\end{equation*}
which is closed and has codimension 1. 
	
Indeed, if $L(0,0)v=f$, for some $f\in Y$ and some $v\in X$, i.e. if $v'' = f$, then integrating on $\G$  gives
\begin{equation*}
0=\int_{\G}v''\,dx=\int_{\G}f\,dx.
\end{equation*}
On the other hand, for every $f\in Y$ such that $\int_{\G}f\,dx=0$, it is easily seen that the minimization problem
\begin{equation*}
\inf_{v\in H^1(\G)}\int_\G\frac{1}{2}|v'|^2+f v\,dx
\end{equation*}
(here $H^1(\G)$ is the space of real-valued $H^1$ functions) always admits  a solution, granting the existence of a function $v\in X$ satisfying $v''=f$. Hence, $(A2)$ is verified too.
	
We finally check that $(A3)$ holds. Differentiating once more \eqref{EQ-def F' u} with respect to $\lambda$ and evaluating at $(0,0)$, we see that
\[
M(0,0)v:= \frac{\partial^2 F}{\partial \lambda\partial u}(0,0)v = - v,\qquad v\in X,
\]
so that, if $v\in \ker L(0,0)$ and $v\not\equiv 0$, then $M(0,0)v\notin\text{im } L(0,0)$, proving that $(A3)$ holds.
	
Therefore, Theorem \ref{bifurcation theorem} applies to $F$ at $(0,0)$, stating that there exists a neighborhood $\NN$ of $(0,0)$ in $\rr\times X$ such that the nontrivial zeros of $F$ that belong to $\NN$ describe a unique $C^1$ curve through $(0,0)$. Since the points
\[
(\lambda,\pm \lambda^{\frac{1}{p-2}})
\]
are zeros of $F$ for every $\lambda>0$, and they enter $\NN$ as $\lambda\to0$, we conclude.
\end{proof}

We can now provide the proof of Theorem \ref{THM 2}.
\medskip

\noindent\textit{Proof of Theorem \ref{THM 2}.} We split the proof into two parts, depending on the value of the exponent of the nonlinearity.
\medskip

\noindent\textit{Part (i): the subcritical case $p\in(2,6)$.} Let $\{u_n\}_{n\in\mathbb{N}}\subset H^1(\G)$ be a sequence of solutions to the minimum problem  \eqref{min} such that

\[
\mu_n:=\|u_n\|_2^2\to 0\qquad\text{as }n\to+\infty\,.
\]
Up to multiplication by a phase, we may assume that each $u_n$ is real (and positive).

Of course, each $u_n$ satisfies 
\begin{equation}
\label{eqn}
u_n''+|u_n|^{p-2}u_n -\lambda_n u_n=0\qquad\hbox{on } \G
\end{equation}
for some Lagrange multiplier $\lambda_n$.
We prove that, for $n$  large enough, the points $(\lambda_n,u_n)$ necessarily enter the neighborhood $\NN$ of Proposition \ref{LEMMA- bifurcation}.

Since for every $n\in\mathbb{N}$,
\begin{equation}
\label{EQ-minimum energy negative}
E(u_n)=\mathcal{E}_\G(\mu_n) \le E(\kappa_{\mu_n})<0,
\end{equation}
using \eqref{EQ-GN subcritical} leads to
\begin{align}
\label{EQ- bound with GN subcritical}
\|u_n'\|_2^2 & <\frac{2}{p}\|u_n\|_p^p\leq C\mu_n^{\frac{p+2}4} \left(\mu_n+\|u_n'\|_2^2\right)^{\frac{p-2}4} \nonumber \\
& \le C\mu_n^{\frac{p}2} + C \mu_n^{\frac{p+2}4}\|u_n'\|_2^{\frac{p-2}{2}},
\end{align}
for some $C > 0$ depending only on $\G$ and $p$. As for $p\in(2,6)$ we have $\frac{p-2}{2} <2$, and $\mu_n \to 0$, we readily see that $\|u_n'\|_2\to 0$ as $n\to \infty$. Therefore $u_n\to 0$ in $H^1(\G)$ and in $L^\infty(\G)$. 

Next, multiplying \eqref{eqn} by $u_n$ and integrating over $\G$, we see, using \eqref{EQ- bound with GN subcritical}, that
\begin{align}
\label{est}
|\lambda_n | & \le \frac{\int_\G|u_n'|^2\,dx+\int_\G|u_n|^p\,dx}{\mu_n}<\frac{(2/p+1)\int_\G|u_n|^p\,dx}{\int_\G|u_n|^2\,dx} \nonumber \\
& \le   C \frac{\mu_n^{\frac{p}2} +  \mu_n^{\frac{p+2}4}\|u_n'\|_2^{\frac{p-2}{2}}}{\mu_n} = C\Big(\mu_n^{\frac{p-2}2} + \mu_n^{\frac{p-2}4}\|u_n'\|_2^{\frac{p-2}{2}}\Big) = o(1)
\end{align}
as $n\to \infty$.

Finally, from \eqref{eqn}, \eqref{est} and $u_n\to 0$ in $L^\infty$, we obtain that $u_n\to 0$ in $H_K^2(\G)$.
We have thus proved that 
\[
(\lambda_n, u_n)\to(0,0)\quad\text{ in } \rr \times H_K^2(\G) \qquad\text{ as }n\to+\infty.
\]
This means (keeping into account that each $u_n$ is real-valued) that  when $n$ is large enough, namely when $\mu$ is less than a threshold value $\mu_2$, the point $(\lambda_n,u_n)$ is in the neighborhood $\NN$ of Proposition \ref{LEMMA- bifurcation}. Hence $u_n$ is constant on $\G$, i.e. $u_n = {\cost}_n$, and we conclude.
\medskip

\noindent\textit{Part (ii): the critical case $p=6$.} The argument is the same as the one of part \textit{(i)}, the only difference being that we can no longer make use of inequality \eqref{EQ-GN subcritical} to guarantee the boundedness in $H^1(\G)$ of a sequence of ground states.

In order to recover this step, let us first assume that $\G$ has at least one terminal edge. Let
again $\{u_n\}_{n\in \nn} \subset H^1(\G)$ be a sequence of solutions of the minimum problem  \eqref{min} with $\|u_n\|_2^2:=\mu_n\to0$ as $n\to+\infty$. Combining \eqref{EQ-minimum energy negative} with the modified Gagliardo-Nirenberg inequality \eqref{EQ-modified GN tip},  leads to
\[
\|u_n'\|_2^2<\frac{1}{3}\|u_n\|_6^6\leq\Big(\frac{\mu_n-\theta_n}{\mu_{\rr^+}}\Big)^2\|u_n'\|_2^2+C\theta_n^{\frac{1}{2}}
\]
and, since $\mu_n\to0$ and $\theta_n\in[0,\mu_n]$, this implies $\|u_n'\|_2\to0$ as $n\to+\infty$, allowing the argument developed in the subcritical case to apply again.

When $\G$ has no terminal edge one can repeat the previous passages in the same way, simply replacing
inequality \eqref{EQ-modified GN tip} with \eqref{EQ-modified GN no tip} when needed.
\hspace{\stretch{1}} $\Box$

\section{The critical regime: proof of Theorems \ref{THM 3} and \ref{THM 4}}

\label{sec:critical regime}

In the critical case $p=6$ the situation is more involved than in the subcritical setting. Indeed, in view of Section \ref{sec:prelim}, ground states  exist if and only if the mass does not exceed a critical value $\mu_\G$ that takes either the value $\mur = \pi \sqrt3/2$ or the value $\mup = \pi\sqrt3/4$, depending on the topology of $\G$. Hence, a natural question is whether it is possible to locate  the threshold $\mu_1$ introduced in Theorem \ref{THM 1} with respect to $\mu_{\rr^+}$ and $\mu_\rr$. A first answer to this problem is provided by Theorem \ref{THM 3}, that we prove now.
\medskip

\noindent\textit{Proof of Theorem \ref{THM 3}.} The first part of the result is straightforward. Indeed,  by \eqref{mu2} and estimate \eqref{estla2} we immediately obtain
\[
\mu_1(\G,6)\geq\frac{\pi}{2}\,.
\]
for every $\G$.

Assume now that  $\G$ admits a cycle covering (Figure \ref{cycles}).
Pick any $\phi\in H^1(\G)$ with zero mean value and let $\phi_1,\phi_2$ be its real and imaginary part, respectively, so that $\phi=\phi_1+i\phi_2$. Of course, both $\phi_1$ and $\phi_2$ have zero mean value. Moreover, for $i=1,2$, denote by $\phi_i^+$ and $\phi_i^-$ the positive and negative parts of $\phi_i$, so that $\phi_i = \phi_i^+ - \phi_i^-$.

Since $\G$ can be covered by cycles, it follows that almost every value $t$ in the range of $\phi_i$, $i=1,2$, has at least two preimages on $\G$, namely
\[
\symbol{35}\{x\in\G\,:\,\phi_i(x)=t\}\geq2\qquad\text{for a.e. }t\in\phi_i(\G),
\]
and the same holds for $\phi_i^+$ and $\phi_i^-$.

For $i=1,2$, let $\ell_i^\pm = |\{x \in \G\; : \; \phi_i^\pm >0\}|$, and note that $\ell_i^+ + \ell_i^- \le \ell = : |\G|$.
Now take the  symmetric rearrangements $\widehat{\phi_i^\pm }\in H^1(-\ell_i^\pm/2,\ell_i^\pm/2)$ of $\phi_i^\pm$, defined as in \cite{AST2015}. Clearly $\widehat{\phi_i^+}(-\ell_i^+/2) = \widehat{\phi_i^+}(\ell_i^+/2) = 0$, and likewise for $\widehat{\phi_i^-}$. Finally, define two functions $\psi_i: [0,\ell] \to \rr$ as
\[
\psi_i(x) = \begin{cases} \widehat{\phi_i^+}(x-\ell_i^+/2) & \text{ if } x \in [0,\ell_i^+] \\ 
-\widehat{\phi_i^-}(x-\ell_i^+ -\ell_i^-/2) & \text{ if } x \in [\ell_i^+, \ell_i^+ + \ell_i^-] \\
0 & \text{ if } x \in [\ell_i^+ + \ell_i^-,\ell]. \end{cases}
\]
By the standard properties of rearrangements (see Proposition 3.1 in \cite{AST2015}), $\psi_1,\psi_2 \in H^1_0(0,\ell)$, have zero mean value and satisfy the usual relations
\[
\int_0^\ell |\psi_i|^2\dx = \int_\G|\phi_i|^2\dx \qquad\text{ and }\qquad 
\int_0^\ell |\psi_i'|^2\dx \le \int_\G |\phi_i'|^2\dx.
\]
Therefore, if we consider $\psi:=\psi_1+i\psi_2$, then $\psi\in H_0^1(0,\ell)$, has zero mean value and
\[
\frac{\int_\G|\phi'|^2\,dx}{\int_\G|\phi|^2\,dx}\geq\frac{\int_0^\ell |\psi'|^2\,dx}{\int_0^\ell |\psi|^2\,dx}
\]
so that, passing to the infimum over all $\phi\in H^1(\G)$ with zero mean value, we obtain

\begin{equation}
\label{EQ-eigen geq circle}
\lambda_2(\G) = \inf_{\substack{\phi \in H^1(\G)\setminus\{0\}\\ \int_\G \phi\,dx=0}}\frac{\int_\G| \phi '|^2\,dx}{\int_\G |\phi|^2\,dx} \geq\inf_{\substack{\psi \in H^1_0(0,\ell)\setminus\{0\}\\ \int_0^\ell \psi\,dx=0}}\frac{\int_0^\ell| \psi '|^2\,dx}{\int_0^\ell |\psi|^2\,dx}=\frac{4\pi^2}{\ell^2}
\end{equation}
(see also \cite{bandlevy} and \cite{berkolaiko-mugnolo} for further references on this inequality and several other of similar fashion).

Inserting this in \eqref{mu2} we conclude that
\[
\mu_1(\G,6)\geq\pi.
\]
\hspace{\stretch{1}} $\Box$

\begin{rem}
\label{REMARK-second eigenvalue and critical mass}
Estimates for $\mu_1$ similar to the ones in Theorem \ref{THM 3} can be derived in the same way also in the subcritical regime. Indeed, combining \eqref{estla2} with \eqref{mu2}, we obtain, for every compact graph $\G$ of length $\ell$ and every $p\in(2,6]$,
\[
\mu_1(\G,p)\geq\ell^{\frac{p-6}{p-2}}\Big(\frac{\pi^2}{p-2}\Big)^{\frac{2}{p-2}}\,.
\]
Note that, when $p\in(2,6)$, rearranging terms, condition \eqref{mu2} may  be rewritten as
\[
\mu^{\frac{p-2}{6-p}}\ell < \Big(\frac{\pi^2}{p-2}\Big)^{\frac{2}{6-p}},
\]
which is consistent with what we anticipated in the Introduction, since the term $\mu^{\frac{p-2}{6-p}}\ell$ is scale invariant.
	
Analogously, if $\G$ has a cycle covering, then by \eqref{EQ-eigen geq circle} it follows
\[
\mu^{\frac{p-2}{6-p}}\ell  > \Big(\frac{4\pi^2}{p-2}\Big)^{\frac{2}{p-2}}\,.
\]
	
\end{rem}
\medskip

\noindent\textit{Proof of Corollary \ref{COR 1}.} It is straightforward. By Theorem \ref{THM 1} the function $\cost$ is orbitally stable for every $\mu$ in the interval $(0,\mu_1)$, $\mu_1 = \mu_1(\G,6)$.

If $\G$ has a terminal edge,  by Theorem \ref{THM 3},
\[
\mu_1 \ge \frac\pi{2} > \frac{\pi\sqrt 3}4= \mup,
\]
while if $\G$ has a cycle  covering, again by Theorem \ref{THM 3},
\[
\mu_1 \ge \pi> \frac{\pi\sqrt 3}2= \mur.
\]

\hspace{\stretch{1}} $\Box$ 
\medskip

We now turn our attention to Theorem \ref{THM 4} and Corollaries \ref{COR 2}-\ref{COR 3}. From now on, we consider compact graphs with no terminal edge nor cycle covering (Fig. \ref{bridge}). Graphs like these always have at least one bridging edge, whose removal disconnects the graph into two disjoint connected components, each of them different from a single vertex.
For the sake of simplicity, in what follows we assume that there is exactly one of such edges in the graph. However, this does not lead to any loss of generality, since all the arguments we develop below can be easily adapted to the case of multiple bridges, considering the longest one among them.

\begin{figure}
\begin{center}
\begin{tikzpicture}[xscale= 0.7,yscale=0.7]
\node at (1,0) [nodo] (01) {};
\node at (2,0) [nodo] (02) {};
\node at (7,0) [nodo] (03) {};
\node at (0,1) [nodo] (11) {};
\node at (1.5,1) [nodo] (12) {};
\node at (3,1) [nodo] (13) {};
\node at (6,1) [nodo] (14) {};
\node at (7,1) [nodo] (15) {};
\node at (8,1) [nodo] (16) {};
\node at (1,2) [nodo] (21) {};
\node at (2,2) [nodo] (22) {};
\node at (7,2) [nodo] (23) {};

\node at (4.5,1.4) (ell) {$\scriptstyle\ell$};
\node at (4.5,-.3) (ell) {$\G_\ell$};
\node at (-.2,1.7) (ell) {$\scriptstyle\Gamma_1$};
\node at (8.3,1.7) (ell) {$\scriptstyle\Gamma_2$};

\node at (11.9,1.7) (ell) {$\scriptstyle\Gamma_1$};
\node at (17.1,1.7) (ell) {$\scriptstyle\Gamma_2$};
\node at (15,-.3) (ell) {$\G_0$};

\draw[-] (11)--(01);
\draw[-] (01)--(02);
\draw[-] (02)--(13);
\draw[-] (13)--(22);
\draw[-] (22)--(21);
\draw[-] (21)--(11);
\draw[-] (11)--(12);

\draw[-] (02)--(21);
\draw[-] (01)--(22);
\draw[-] (03)--(23);

\draw[-] (14)--(03);
\draw[-] (03)--(16);
\draw[-] (16)--(23);
\draw[-] (23)--(14);
\draw[-] (15)--(16);

\draw[-] (13)--(14);

\node at (13,0) [nodo] (001) {};
\node at (14,0) [nodo] (002) {};
\node at (16,0) [nodo] (003) {};
\node at (12,1) [nodo] (011) {};
\node at (13.5,1) [nodo] (012) {};
\node at (15,1) [nodo] (013) {};
\node at (15,1) [nodo] (014) {};
\node at (16,1) [nodo] (015) {};
\node at (17,1) [nodo] (016) {};
\node at (13,2) [nodo] (021) {};
\node at (14,2) [nodo] (022) {};
\node at (16,2) [nodo] (023) {};

\draw[-] (011)--(001);
\draw[-] (001)--(002);
\draw[-] (002)--(013);
\draw[-] (013)--(022);
\draw[-] (022)--(021);
\draw[-] (021)--(011);
\draw[-] (011)--(012);

\draw[-] (002)--(021);
\draw[-] (001)--(022);
\draw[-] (003)--(023);

\draw[-] (014)--(003);
\draw[-] (003)--(016);
\draw[-] (016)--(023);
\draw[-] (023)--(014);
\draw[-] (015)--(016);

\draw[-] (013)--(014);

\end{tikzpicture}
\caption{\footnotesize{The graphs $\G_\ell$ and their ``limit'' $\G_0$}}
\label{gl}
\end{center}

\end{figure}
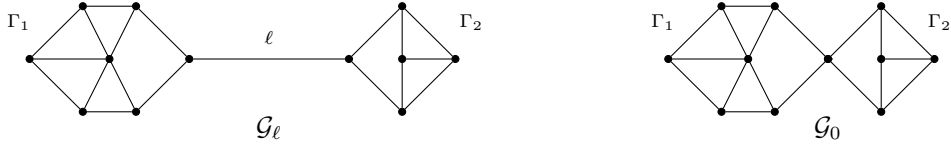

So let  $\G_\ell$ be a graph with bridging edge $e$ of length $\ell$ and let $|\G_\ell |$ be the total length of $\G_\ell$. Let also $\Gamma_1$ and $\Gamma_2$ be the two disjoint connected components in which $\G_\ell$ breaks up after the removal of $e$, i.e., $\G_\ell\setminus\{e\}=\Gamma_1\cup\Gamma_2$, and note that they do not depend on $\ell$ (see Fig. \ref{gl}). We identify $e$ with the interval $[0,\ell]$ in such a way that $x=0$ corresponds to the vertex in $\Gamma_1$, whereas $x=\ell$ to the one in $\Gamma_2$. 
\medskip

\noindent\textit{Proof of Theorem \ref{THM 4}.} We split the proof into two parts.
\medskip

\noindent\textit{First part: asymptotics for $\ell\to \infty$.} Assume without loss of generality that $|\Gamma_1|\geq |\Gamma_2|$. and set $a=|\Gamma_1|-|\Gamma_2|$.

For $\ell>a$ we define $u \in H^1(\G_\ell)$ as
\[
u_\ell(x)=\begin{cases}
1 &  \text{if }x\in\Gamma_1\\
\cos\left(\frac{\pi x}{\ell-a}\right) & \text{if } x \in [0,\ell-a] \\
-1 & \text{elsewhere on } \G_\ell.
\end{cases}
\]
By construction $u_\ell$ has zero mean value and therefore
\[
\lambda_2(\G_\ell) \le \frac{\int_{\G_\ell} |u_\ell'|^2\dx}{\int_{\G_\ell} |u_\ell|^2\dx}
 \le \frac{\int_0^{\ell-a} |u_\ell'|^2\dx}{\int_0^{\ell-a} |u_\ell|^2\dx} = \frac{\pi^2}{(\ell-a)^2}.
\]
Plugging this inequality into \eqref{mu2} with $p=6$ yields
\[
\mu_1(\G_\ell,6)\leq\frac{\pi}{2}\frac{|\G_\ell |}{\ell-a} = \frac{\pi}{2}\frac{|\Gamma_1| + |\Gamma_2|+ \ell}{\ell-a}\,,
\]
so that, passing to the limit $\ell\to \infty$  and coupling with Theorem \ref{THM 3}, we obtain
\[
\lim_{\ell\to+\infty}\mu_1(\G_\ell,6)=\frac{\pi}{2}\,.
\]
\medskip

\noindent\textit{Second part: asymptotics for $\ell\to0$.} Let $\varphi_\ell$ be an  eigenfunction associated with $\lambda_2(\G_\ell)$, normalized by $\|\varphi_\ell\|_{L^2(\G_\ell)}=1$. Let $v \in H^1(\G_\ell)$ be a fixed function supported in $\Gamma_1$, with zero mean value, and normalized by $\|v\|_{L^2(\G_\ell)}=1$. Then
\[
\int_{\G} |\varphi_\ell' |^2\dx = \lambda_2(\G_\ell) \le \int_{\Gamma_1} |v' |^2\dx
\]
for every $\ell$, from which we see that $\varphi_\ell$ is bounded in $H^1(\G_\ell)$ uniformly in $\ell$.

The restriction of $\varphi_\ell$ to $\Gamma_1\cup\Gamma_2$ is a fortiori bounded in $H^1(\Gamma_1\cup\Gamma_2)$, and therefore we may assume that, up to subsequences, it converges to a function $\psi$, weakly in $H^1(\Gamma_1\cup\Gamma_2)$ and strongly in $L^\infty(\Gamma_1\cup\Gamma_2)$. 

Since
\[
|\varphi_\ell(\ell)-\varphi_\ell(0)|  \le \int_{\G_\ell} |\varphi_\ell' |\dx \le \sqrt\ell\left(\int_{\G_\ell} |\varphi_\ell' |^2\dx\right)^{1/2} = \sqrt{\ell \lambda_2(\G_\ell)} = o(1)
\]
as $\ell \to 0$, we see that the limit function $\psi$ is well defined in the vertex where $\Gamma_1$ and $\Gamma_2$ meet when $\ell=0$, and hence it  can be considered as a function on $\G_0$. 

Furthermore,
\[
\int_{\G_0} |\psi |^2\dx = \lim_{\ell \to 0} \int_{\Gamma_1\cup\Gamma_2} |\varphi_\ell |^2\dx = 
\lim_{\ell \to 0}\left( \int_{\G_\ell} |\varphi_\ell |^2\dx -  \int_0^\ell |\varphi_\ell |^2\dx  \right) = 1
\]
and, similarly,
\[
\int_{\G_0} \psi \dx = \lim_{\ell \to 0} \int_{\Gamma_1\cup\Gamma_2} \varphi_\ell \dx = 
\lim_{\ell \to 0}\left( \int_{\G_\ell} \varphi_\ell \dx -  \int_0^\ell \varphi_\ell \dx  \right) = 0
\]
since $\|\varphi_\ell\|_{L^\infty({\G_\ell} )}$ is  bounded independently of $\ell$. Thus $\psi \in H^1(\G_0)$ has zero mean value and is normalized in $L^2$. Therefore, since $\G_0$ can be covered by cycles, we have by \eqref{EQ-eigen geq circle},
\[
\frac{4\pi^2}{|\G_0|^2} \le \int_{\G_0} |\psi' |^2\dx \le \liminf_{\ell\to 0} \int_{\Gamma_1\cup\Gamma_2} |\varphi_\ell' |^2\dx
\le  \liminf_{\ell\to 0} \int_{\G_\ell} |\varphi_\ell'|^2\dx =  \liminf_{\ell\to 0} \lambda_2(\G_\ell)
\]
which, via \eqref{mu2} with $p=6$, yields 
\[
\pi\leq\liminf_{\ell\to0}\mu_1(\G_\ell,6)\,.
\]
\hspace{\stretch{1}} $\Box$ 
\medskip

\noindent\textit{Proof of Corollary \ref{COR 2}.}  
As recalled in Section \ref{sec:prelim}, since $\G_\ell$ has no terminal edge, ground states of mass $\mu$ exist if and only if $\mu\in(0,\mu_\rr]$. On the other hand, since in addition $\G_\ell$ cannot be covered by cycles, by Theorem \ref{THM 4},
\[
\lim_{\ell \to \infty} \mu_1(\G_\ell,6) = \frac\pi 2 < \frac{\pi\sqrt 3}2 = \mur,
\]
so that $\mu_1(\G_\ell,6) < \mur$ for every $\ell$ large enough. In this range of lengths,
by Theorem \ref{THM 1}, for every $\mu \in (\mu_1(\G_\ell,6),\mur]$ the function $\cost$ is not a local minimum of the energy, and a fortiori not a ground state. Hence the ground states must be nonconstant.
\hspace{\stretch{1}} $\Box$ 
\medskip

\noindent\textit{Proof of Corollary \ref{COR 3}.} Since by Theorem \ref{THM 4}
\[
\liminf_{\ell \to 0} \mu_1(\G_\ell,6) \ge \pi > \frac{\pi\sqrt 3}2 = \mur,
\]
for every $\ell$ small enough $\mu_1(\G_\ell,6) > \mur$. Therefore in this range of lengths, by Theorem \ref{THM 1},  for every $\mu \in (\mu_1(\G_\ell,6),\mur]$ the function $\cost$ is   orbitally stable.
\hspace{\stretch{1}} $\Box$ 

\begin{rem}
\label{REM-constant ground states}
We remark that working with graphs without terminal edges and no cycle covering is up to now essential in order to find nonconstant ground states . Indeed, if $\G$ has a terminal edge, ground states exist if and only if $\mu\in(0,\mu_{\rr^+}]$, while the constant function $\cost$ locally minimizes the energy for masses up to $\mu_1\geq\pi/2>\mup$, by Theorem \ref{THM 3}. Thus $\cost$ remains a local minimizer for all the masses allowing for the existence of global minimizers. The same situation occurs when $\G$ admits a cycle covering, as ground states exist for  $\mu \le \mu_\rr=\sqrt{3}\pi/2$, whereas $\cost$ is a local minimizer of $E$ for every $\mu\leq\mu_1$ and $\mu_1\geq\pi$ by Theorem \ref{THM 3}. We believe that in all these cases the constant function $\cost$ is the ground state of the energy for every mass for which global minimizers exist, even though we cannot provide any proof of this conjecture.
\end{rem}

\end{document}